\newcommand{\Co}{\mathcal{C}}
\newcommand{\ZZ}{\mathbb{Z}}
\newcommand{\KK}{\mathbb{K}}
\newcommand{\NN}{\mathbb{N}}
\newcommand{\F}{\mathbb{F}}
\newcommand{\bx}{\mathbf{x}}
\newcommand{\by}{\mathbf{y}}
\newcommand{\lt}{\text{lt}}
\newcommand{\supp}{\text{supp}}
\newcommand{\wt}{\text{wt}}
\newcommand{\dist}{\text{dist}}
\newcounter{exampleNo}
\newtheorem{theorem}{Theorem}[section]
\newtheorem{lemma}[theorem]{Lemma}
\newtheorem{proposition}[theorem]{Proposition}
\newtheorem{corollary}[theorem]{Corollary}
\newenvironment{example}[1][Example \arabic{exampleNo}.]{\begin{trivlist}\refstepcounter{exampleNo}
\item[\hskip \labelsep {\bfseries #1}]}{\hfill $\diamondsuit$\end{trivlist}}
\title{On Binomial Ideals Associated to Linear Codes}
\author{Natalia Dück, Karl-Heinz Zimmermann}
\begin{document}
 \maketitle
\begin{abstract}
Recently, it was shown that a binary linear code can be associated to a binomial ideal given
as the sum of a toric ideal and a non-prime ideal. 
Since then two different generalizations have been provided which coincide for the binary case.
In this paper, we establish  some connections between the two approaches. 
In particular, we show that the corresponding code ideals are related by elimination.
Finally, a new heuristic decoding method for linear codes over prime fields is discussed using Gröbner bases. 
\end{abstract}

\section{Introduction}
Digital data are exposed to errors when transmitted through a noisy channel.
But since receiving correct data is indispensable in many applications, error-correcting codes are employed to tackle this problem. 
By adding redundancy to the messages, errors can be detected and corrected~\cite{macws,vlint}. 

Gröbner bases, on the other hand, are a powerful tool that has originated from commutative algebra providing a uniform
approach to grasp a wide range of problems such as solving algebraic systems of equations, testing ideal membership,
and effective computing in residue class rings modulo polynomial ideals~\cite{adams, cls}. 

In~\cite{borges} both subjects have been linked by associating a binary linear code to 
a certain binomial ideal given as the sum of a toric ideal and a non-prime ideal. 
In addition, the authors demonstrated how the computation of the minimum distance 
can be accomplished by a Gröbner basis computation. In~\cite{ mahkhz4, mahkhz3}
this approach has been extended to codes over finite prime fields, whose associated binomial ideals are called code ideals. 
In this way, several concepts from the rich theory of toric ideals can be translated into the setting of code ideals. 

Another generalization of~\cite{borges} was given in~\cite{mahkhz6,martcodeideal,mahkhzX}.
The approach in~\cite{martcodeideal} covers the general case of linear codes over arbitrary finite fields by introducing 
the so-called generalized code ideal.

This paper pursues two objectives. 
First, both approaches are linked to provide further inside into the structure of ideals associated to linear codes.
In the binary case both approaches are the same and it will be shown that in the case of a prime field
the code ideal is an elimination ideal of the generalized code ideal. 
Furthermore, it will be proved that the reduced Gröbner basis for the generalized code ideal with respect 
to the lexicographic order can be explicitly constructed from a generator matrix in standard form.

Second, a heuristic method is introduced which allows to decode linear codes using the corresponding code ideal 
instead of the generalized code ideal. One of the main reasons for introducing the generalized
code ideal is that it enables us to apply the same procedure for decoding as in the binary case, 
but at the expense of introducing considerably more variables.
Since for codes over prime fields the code ideal provides an alternative to the generalized code ideal
which requires only a $(q-1)$-fractional amount of variables it is reasonable to look for an alternative way
of decoding based on the code ideal. We will provide such a heuristic method and discuss its pros and cons.

\section{Preliminaries}
\subsection{Linear Codes}

Let $\F_q$ denote the finite field with $q$ elements. A \textit{linear code} $\Co$ of length $n$ and dimension $k$ over $\F_q$
is the image of a one-to-one linear mapping from $\F_q^k$ to $\F_q^n$. 
In other words, the code $\Co$ is a subspace
of the vector space $\F_q^n$ of dimension $k\leq n$. Such a code $\Co$ is called an $[n,k]$ code whose elements
are called \textit{codewords} and are usually written as row vectors.

A \textit{generator matrix} for an $[n,k]$ code $\Co$ is a $k\times n$ matrix $G$ over $\F_q$ whose rows form a basis of $\Co$.
A generator matrix in reduced echelon form 
$G=\left(I_k\mid M\right)$, where $I_k$ denotes the $k\times k$ identity matrix, is said to be in \textit{standard form}
and the corresponding code $\Co$ is called \textit{systematic}.

A \textit{parity check matrix} $H$ for an $[n,k]$ code $\Co$ is an $(n-k)\times n$ matrix over $\F_q$ such that 
a word $c\in\F_q^n$ belongs to $\Co$ if and only if $cH^T=\mathbf{0}$. 
It follows that the code $\Co$ equals the kernel of the matrix $H$ given as a mapping from $\F_q^n$ to $\F_q^{n-k}$~\cite{macws, vlint}.

\subsection{Gröbner Bases and Toric Ideals}

Write $\KK[\bx] = \KK[x_1,\ldots,x_n]$ for the commutative polynomial ring in $n$ indeterminates over 
an arbitrary field $\KK$ and denote the \textit{monomials\/} in $\KK[\bx]$ by 
$\bx^{u} = x_1^{u_1}x_2^{u_2}\cdots x_n^{u_n}$, where $u=(u_1,\ldots,u_n)\in\NN_0^n$.

A \textit{monomial order\/} on $\KK[\bx]$ is a relation $\succ$ on the set of monomials in $\KK[\bx]$ 
(or equivalently, on the exponent vectors in $\NN_0^n$) satisfying:
(1) $\succ$ is a total ordering, (2) the zero vector $\mathbf{0}$ is the unique minimal element, and
(3) $u\succ v$ implies $u+w\succ v+w$ for all $u,v,w\in\NN_0^n$.

Given a monomial order $\succ$, each non-zero polynomial $f\in\KK[\bx]$ has a unique \textit{leading term}, denoted by $\lt_\succ(f)$, 
which is given by the largest involved term. 

The \textit{leading ideal\/} of an ideal $I$ w.r.t.\ a monomial order $\succ$ is the monomial ideal 
generated by the leading monomials of its elements,
\begin{align}
 \lt_\succ(I)= \langle\lt_\succ(f)\mid f\in I\rangle.
\end{align}
The Gröbner basis for an ideal $I$ in $\KK[\bx]$ w.r.t.\ $\succ$ is a finite subset $\mathcal{G}$ of $I$ with the property that
the leading terms of the polynomials in $\mathcal{G}$ generate the leading ideal of $I$, i.e., 
\begin{align}
\lt_\succ(I) = \langle\lt_\succ(g)\mid g\in \mathcal{G}\rangle.
\end{align}
A monomial $\bx^\alpha\notin\lt_\succ(I)$ is called a \textit{standard monomial\/}. The set of all
standard monomials forms a basis for the $\KK$-algebra $\KK[\bx]/I$. 

If no monomial in a Gröbner basis is redundant and for any two distinct elements $g,h\in \mathcal{G}$, 
no term of $h$ is divisible by $\lt_\succ(g)$, then $\mathcal{G}$ is called \textit{reduced}. 
A reduced Gröbner basis is uniquely determined (provided that the generators are monic) and henceforth 
the reduced Gröbner basis for an ideal $I$ w.r.t.\ $\succ$ will be denoted by $\mathcal{G}_\succ(I)$.
For more information on Gröbner basics the reader should consult~\cite{adams, becker, cls}.

A \textit{binomial} in $\KK[\bx]$ is a polynomial consisting of two terms, i.e., a binomial is
of the form $c_\alpha\bx^\alpha-c_\beta\bx^\beta$, where $\alpha,\beta\in\NN_0^n$ and $c_\alpha,c_\beta\in\KK$ are non-zero. 
A binomial is \textit{pure} if the involved monomials are relatively prime.
A \textit{binomial ideal} is an ideal generated by binomials.

Let $A=\left(a_{ij}\right)$ be a non-negative integral $m\times n$ matrix and take the polynomial 
rings $\KK[\bx]=\KK[x_1,\dots,x_n]$ and $\KK[\by]=\KK[y_1,\dots,y_m]$.
Define the $\KK$-algebra homomorphism
$\varphi:\KK[\bx]\rightarrow\KK[\by]$ by $\varphi(x_i)=y_1^{a_{1i}}y_2^{a_{2i}}\cdot y_m^{a_{mi}}$,
where $a_i=(a_{1i},a_{2i},\ldots,a_{mi})^T$ denotes the $i$th column of the matrix $A$ for $1\leq i\leq n$.
The kernel of the morphism $\varphi$ is an ideal of $\KK[\bx]$, called \textit{toric ideal} 
associated to the matrix $A$, and denoted by $I_A=\ker(\varphi)$.

For any integer $u$, write $u^+=\max\{0,u\}$ and $u^-=\left(-u\right)^+$
and for any integer vector $u=(u_1,\dots,u_n)$ define the corresponding vectors $u^+$ and $u^-$ com\-po\-nent\-wise.
Clearly, the vectors $u^+$ and $u^-$ have disjoint support and thus any vector $u\in\ZZ^n$ can 
be uniquely written as $u=u^+-u^-$. 
For instance, if $u=(2,0,-3)$, then $u^+ = (2,0,0)$ and $u^- = (0,0,3)$.
In view of this notation, the toric ideal $I_A$ is generated by pure binomials~\cite{robbiano,sturmfels}, 
\begin{align}
 I_A=\left\langle \bx^{u^+}-\bx^{u^-}\mid u=u^+-u^-\in\ker_\ZZ(A)\right\rangle,\label{eq-toricgen}
\end{align}
where $\ker_\ZZ(A)$ denotes the kernel of the matrix $A$ defined as a mapping from $\ZZ^n$ to $\ZZ^m$.

\section{The Code Ideal}

Let $\Co$ be an $[n,k]$ code over a finite field $\F_p$, where $p$ is a prime number, and let $\KK$ be an arbitrary field.
In view of~\cite{mahkhz3}, define the \textit{code ideal} in $\KK[\bx]=\KK[x_1,\dots,x_n]$ associated to the code $\Co$ as a sum of binomial ideals
\begin{align}
 I(\Co)=I'(\Co)+I_p
\end{align}
where 
\begin{align}
I'(\Co) = \langle \bx^c-\bx^{c'}\mid c-c'\in\Co \rangle
\end{align}
and
\begin{align}
I_p=\left\langle x_i^p-1\mid 1\leq i\leq n\right\rangle.
\end{align}
In terms of the ideal $I_p$, the exponent of any monomial can be treated as a vector 
in $\F_p^n$ because for any $1\leq i\leq n$ and $0\leq r\leq p-1$,
\begin{align*}
 x_i^{p+r}\equiv x_i^{p+r}-x_i^r\cdot(x_i^p-1)=x_i^r\mod I_p
\end{align*}
and thus by induction for any integer $m\geq 0$,
\begin{align*}
x_i^{m\cdot p+r}\equiv x_i^r\mod I_p.
\end{align*}
The code ideal $I(\Co)$ can be based on a toric ideal.
To see this, let $H$ be a parity check matrix for the code $\Co$ 
and let $H'$ be an integral matrix such that $H=H'\otimes_\ZZ\F_p$. 
Then 
\begin{align}
 I(\Co)=I_{H'}+I_p.
\end{align}
It follows that the code ideal is the sum of a prime ideal and a non-prime ideal. 
Although the code ideal is not toric, it resembles a toric ideal in some respects. 
Similar to~(\ref{eq-toricgen})
the code ideal is generated by pure binomials $\bx^u-\bx^{u'}$, where $u-u'$ belongs to the kernel of $H$.

The binomial $\bx^u-\bx^{u'}\in I(\Co)$ is said to {\em correspond\/} to the codeword $u-u'$. 
However, note that in contrast to the integral case, there is no unique way of writing $u=u^+-u^-$. 
For example, the word $(1,1,0)$ in $\F_2^3$ can be written as $(1,1,0)=(0,1,0)-(1,0,0)$ 
or $(1,1,0)=(1,0,0)-(0,1,0)$. 
Thus, different binomials may correspond to the same codeword. 

Note that the reduced Gröbner basis w.r.t.\ the lexicographic (lex) ordering with $x_1\succ x_2\succ\ldots\succ x_n$
can be directly read off from a generator matrix in standard form~\cite{mahkhz4}.
More specifically, let $G$ be a standard generator matrix for $\Co$ with row vectors $\mathbf{g}_i=\mathbf{e}_i-\mathbf{m}_i$,
where $\mathbf{e}_i$ denotes the $i$th unit vector, $1\leq i\leq k$.
Then the reduced Gröbner basis $\mathcal{G}_\succ(I(\Co))$ w.r.t.\ the lex ordering is given by
\begin{align}
\mathcal{G}_\succ(I(\Co))= \left\{ x_i-\bx^{\mathbf{m}_i}\mid 1\leq i\leq k\right\}\cup
\left\{ x_i^p-1\mid k+1\leq i\leq n\right\}.
\label{eq-IclexGB}
\end{align}

Given an $[n,k]$ code $\Co$ over $\F_p$.
The corresponding code ideal can be considered as an elimination ideal of a 
toric ideal~\cite[Remark~1]{marmartinez}. This will be specified in Prop.~\ref{prop-toricLink}.
Beforehand, we require further definitions. 

To a non-negative integral $m\times n$ matrix $A$ associate the integral $m\times (m+n)$ matrix 
\begin{align}
A(p)=\left(A\mid p \cdot I_m\right).\label{eq-matrixmodext}
\end{align} 
For an integral matrix $A\in\ZZ^{m\times n}$, let $\ker_\ZZ(A)$ denote the kernel of $A$ as a mapping from $\ZZ^n$ to $\ZZ^m$,
and let $\ker_p(A)$ denote the kernel of the matrix $A\otimes_{\ZZ}\ZZ_p$ as a mapping from $\ZZ^n_p$ to $\ZZ_p^m$. 
Note that for any vector $u\in\ZZ_p^n$, $u\in\ker_p(A)$ is equivalent to $Au\equiv\mathbf{0}\mod  p$ or 
$Au=pv$ for some $v\in\ZZ^m$, which in turn is equivalent to $(u,-v)\in\ker_\ZZ(A(p))$ for some $v\in\ZZ^m$.
In other words, there is a bijective correspondence between $\ker_\ZZ(A(p))$ and $\ker_p(A)$
given by the projection onto the first $n$ coordinates.

In the following, the toric ideal associated to the matrix $A(p)$ is studied in the 
polynomial ring $\KK[\bx,\by] = \KK[x_1,\dots,x_n,y_1,\dots,y_m]$. 
\begin{proposition}\label{prop-toricLink}
Let $I(\Co)$ be the code ideal of an $[n,k]$ code $\Co$ over $\F_p$ with parity check matrix $H$ 
and let $I_{H'(p)}$ be the toric ideal associated to the non-negative integral matrix $H'$ with 
$H=H'\otimes_\ZZ \F_p$. The code ideal $I(\Co)$ is given as elimination ideal 
\begin{align}
 I(\Co)=\left(I_{H'(p)}+\left\langle y_{1}-1,\dots,y_{m}-1\right\rangle\right)\cap\KK[\bx].\label{eq-Icaselim}
\end{align}
Equivalently,
\begin{align}
 I(\Co)=\left\{f(\bx,\mathbf{1})\mid f\in I_{H'(p)}\right\},\label{eq-prop2}
\end{align}
where $\mathbf{1}$ denotes the all-1 vector.
\end{proposition}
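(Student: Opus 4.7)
The plan is first to reduce the equivalence of~(\ref{eq-Icaselim}) and~(\ref{eq-prop2}) to a substitution trick, and then to prove $I(\Co)=\{f(\bx,\mathbf{1})\mid f\in I_{H'(p)}\}$ by checking both inclusions on pure-binomial generators, using the description~(\ref{eq-toricgen}) of $I_{H'(p)}$. For the equivalence, I would consider the $\KK$-algebra homomorphism $\phi:\KK[\bx,\by]\to\KK[\bx]$ that fixes each $x_i$ and sends every $y_j$ to $1$. Its kernel is precisely $\langle y_1-1,\ldots,y_m-1\rangle$, so for $J=I_{H'(p)}+\langle y_1-1,\ldots,y_m-1\rangle$ one obtains $J\cap\KK[\bx]=\phi(J)=\phi(I_{H'(p)})=\{f(\bx,\mathbf{1})\mid f\in I_{H'(p)}\}$, and it therefore suffices to prove $I(\Co)=\phi(I_{H'(p)})$.

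For $I(\Co)\subseteq\phi(I_{H'(p)})$ I would lift each generator. Since $H'$ is non-negative, the vector $(p\mathbf{e}_i,-H'\mathbf{e}_i)$ lies in $\ker_\ZZ(H'(p))$ and supplies the pure binomial $x_i^p-\by^{H'\mathbf{e}_i}\in I_{H'(p)}$, whose image under $\phi$ is $x_i^p-1$. For a generator $\bx^c-\bx^{c'}$ of $I'(\Co)$, setting $w:=c-c'\in\ZZ^n$ one has $w\bmod p\in\Co=\ker_p(H')$, so there exists $v\in\ZZ^m$ with $H'w+pv=0$; then $(w,v)\in\ker_\ZZ(H'(p))$ yields a pure binomial whose $\phi$-image is $\bx^{w^+}-\bx^{w^-}$, and multiplying by the monomial $\bx^{\min(c,c')}$ (legitimate since $\phi(I_{H'(p)})$ is an ideal) recovers $\bx^c-\bx^{c'}$.

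For the reverse inclusion I would check that $\phi$ sends each pure-binomial generator of $I_{H'(p)}$ into $I(\Co)$. Given $(u,v)\in\ker_\ZZ(H'(p))$, the image $\bx^{u^+}-\bx^{u^-}$ of $\bx^{u^+}\by^{v^+}-\bx^{u^-}\by^{v^-}$ satisfies $u\bmod p\in\ker_p(H')=\Co$, because $H'u+pv=0$. Reducing exponents componentwise modulo $p$ via the relations in $I_p$ gives $\bx^{u^+}-\bx^{u^-}\equiv\bx^{\bar u^+}-\bx^{\bar u^-}\pmod{I_p}$ with $\bar u^\pm\in\{0,\ldots,p-1\}^n$; since $\bar u^+-\bar u^-\equiv u\pmod p$ is a codeword, the residual binomial lies in $I'(\Co)$, placing the original one in $I'(\Co)+I_p=I(\Co)$.

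The main obstacle is the bookkeeping between the $\ZZ$-kernel of $H'(p)$ and the $\F_p$-kernel of $H'$: the decomposition $u=u^+-u^-$ is unique for $u\in\ZZ^n$, but a codeword admits many representations $c-c'$ with $c,c'\in\NN_0^n$, so one has to confirm that multiplying by $\bx^{\min(c,c')}$ absorbs this ambiguity in the lifting step and that componentwise reduction modulo $p$ preserves the codeword property of the difference of exponents in the descent step.
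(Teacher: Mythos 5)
Your proposal is correct and takes essentially the same route as the paper's own proof: both directions are verified on pure-binomial generators via the bijective correspondence between $\ker_\ZZ(H'(p))$ and $\ker_p(H')$, with the $\by$-variables set to $1$. You merely fill in details the paper leaves implicit, namely the substitution-homomorphism argument for the equivalence of~(\ref{eq-Icaselim}) and~(\ref{eq-prop2}) (which the paper declares clear), the explicit lift of the generators $x_i^p-1$, and the componentwise reduction of exponents modulo $p$ in the descent step.
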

\begin{proof}
It is clear that the statements (\ref{eq-Icaselim}) and (\ref{eq-prop2}) are equivalent. Thus it is
sufficient to prove that (\ref{eq-prop2}) holds. 

Let $\bx^a-\bx^b\in I(\Co)$ and so $a-b\in\ker_p(H')$.
By the preceding remark, there is a vector $d\in\ZZ^m$ such that $(a-b,d)\in\ker_\ZZ(H'(p))$ and so 
$\bx^a\by^{d^+}-\bx^b\by^{d^-}$ belongs to $I_{H'(p)}$. 
Conversely, let $\bx^a\by^{a'}-\bx^b\by^{b'}\in I_{H'(p)}$. Then $(a-b,a'-b')$ belongs to $\ker_\ZZ(H'(p))$
and by the bijective correspondence between $\ker_\ZZ(H'(p))$ and $\ker_p(H')$,
$a-b$ belongs to $\Co$ and thus $\bx^a-\bx^b\in I(\Co)$.
\end{proof}

\begin{example}\label{ex-H1}
Consider the $[3,2]$ code $\Co$ over $\F_7$ with generator matrix 
$$G=\begin{pmatrix}1&0&4\\0&1&1\end{pmatrix}$$
and corresponding parity check matrix $H=\begin{pmatrix}1&2&5\end{pmatrix}$.
Choose $H'=\begin{pmatrix}1&2&5\end{pmatrix}$ and so $H'(7)=\begin{pmatrix}1&2&5&7\end{pmatrix}$.
A computation in {\tt Singular}~\cite{DGPS} provides the reduced Gröbner basis for $I_{H'(7)}$ w.r.t.\ the lex ordering,
\begin{align*}
 \mathcal{G}=\{&x_3^7-y^5, x_2^5-x_3^2, x_2y^4-x_3^6, x_2^2y^3-x_3^5, x_2^3y^2-x_3^4,\\
		&x_2^4y-x_3^3, x_2x_3-y, x_1^2-x_2, x_1y-x_2^4,x_1x_3-x_2^3,x_1x_2^2-x_3,\}.	       
\end{align*}
Substituting $y=1$ for all these binomials yields the set
\begin{align*}
 \{&x_3^7-1, x_2^5-x_3^2, x_2-x_3^6, x_2^2-x_3^5, x_2^3-x_3^4,x_2^4-x_3^3, \\
		&x_2x_3-1, x_1^2-x_2, x_1-x_2^4,x_1x_3-x_2^3,x_1x_2^2-x_3\}.
\end{align*}
The reduced Gröbner basis for the ideal generated by these polynomials is 
\begin{align*}
\{x_1-x_3^3, x_2-x_3^6, x_3^7-1\},
\end{align*}
which coincides with the reduced Gröbner basis for $I(\Co)$ as given in~(\ref{eq-IclexGB}).
\end{example}
The matrix $H'$ in Prop.~\ref{prop-toricLink} can always be chosen to be non-negative. 
In this way, working with Laurent polynomials or an additional indeterminate can be avoided.

\section{The Generalized Code Ideal}

In the preceding section, the code ideal associated to a linear code over a finite prime field has been introduced.
Now the code ideal corresponding to a linear code over an arbitrary finite field is described following~\cite{martcodeideal}.
\medskip

For this, let $\Co$ be an $[n,k]$ code over the field $\F_q$, where $q=p^r$, $p$ is a prime, and $r\geq 1$ is an integer.
Let  $\alpha$ be a primitive element of $\F_q$, i.e., 
$\F_{q}=\left\{0,\alpha,\alpha^2,\dots,\alpha^{q-2},\alpha^{q-1}=1\right\}$. 
The {\em crossing map\/} 
$$\blacktriangle:\F_q^n\rightarrow\ZZ^{n(q-1)}$$ is defined as 
\begin{align*}
\mathbf{a}=(a_1,\dots,a_n) = (\alpha^{j_1},\dots,\alpha^{j_n})\mapsto(\mathbf{e}_{j_1},\dots,\mathbf{e}_{j_n}),
\end{align*}
where $\mathbf{e}_i$ is the $i$th unit vector of length $q-1$, $1\leq i\leq q-1$, and 
each zero coordinate is mapped to the zero vector of length $q-1$. 
The associated mapping $$\blacktriangledown:\ZZ^{n(q-1)}\rightarrow \F_q^n$$ is given as
\begin{align*}
 (j_{1,1},\dots,j_{1,q-1},j_{2,1},\dots,j_{n,q-1})\mapsto 
\left(\sum_{i=1}^{q-1}j_{1,i}\alpha^i,\dots,\sum_{i=1}^{q-1}j_{n,i}\alpha^i\right).
\end{align*}
For instance, in view of the field $\F_5=\{0,\alpha=2,\alpha^2=4,\alpha^3=3,\alpha^4=1\}$,
$$\blacktriangle(1,0,3)=\blacktriangle(\alpha^4,0,\alpha^3)
=(\mathbf{e}_4,\mathbf{0},\mathbf{e}_3)=(0,0,0,1,0,0,0,0,0,0,1,0) $$
and
$$\blacktriangledown (0,0,0,1,0,0,0,0,0,0,1,0)=(\alpha^4,0,\alpha^3).$$
Note that the mapping $\blacktriangledown$ is the left inverse of the crossing map $\blacktriangle$, 
i.e., $\blacktriangledown\circ\blacktriangle$ is the identity on $\F_q^n$, but it is not the right inverse.
\medskip

Put $\bx_j=(x_{j1},x_{j2},\dots,x_{j,q-1})$, $1\leq j\leq n$, and $\bx=(\bx_1,\dots,\bx_n)$.
Define the \textit{generalized code ideal} associated to the code $\Co$ as 
\begin{align}
 I_+(\Co)=\left\langle\bx^{\blacktriangle a}-\bx^{\blacktriangle b}\mid a-b\in\Co\right\rangle\subseteq\mathbb{K}[\bx]
\label{eq-otherIc}.
\end{align}
For instance, in view of the previous example, $x^{\blacktriangle(1,0,3)} = x^{(0,0,0,1,0,0,0,0,0,0,1,0)} = x_{14}x_{33}$.

A generating set for the code ideal $I_+(\Co)$ will contain both 
a generating set of the associated linear code and the associated scalar multiples,
and an encoding of the additive structure of the field $\F_q$~\cite{martcodeideal,mahkhzX}.
The latter can be given by the ideal $I_q$ in $\KK[\bx]$ generated by the set
\begin{eqnarray}
\bigcup_{i=1}^n \left( \left\{ x_{iu}x_{iv}-x_{iw}\mid\alpha^u+\alpha^v=\alpha^w\right\}
	  \cup \left\{ x_{iu}x_{iv}-1\mid\alpha^u+\alpha^v=0 \right\}\right). \label{eq-relationsI}
\end{eqnarray}

\begin{theorem}[\cite{martcodeideal}]\label{thm-martShapeIc}
Let $\Co$ be an $[n,k]$ code over $\F_q$ and
suppose $\mathbf{g}_1,\ldots, \mathbf{g}_k$ are the row vectors of a generator matrix for $\Co$.
The generalized code ideal associated to the code $\Co$ is 
\begin{align}\label{eq-codeidealGen}
I_+(\Co) = I_G + I_q,
\end{align}
where $I_G$ is an ideal of\/ $\KK[\bx]$ with generating set
\begin{align}
\left\{\bx^{\blacktriangle(\alpha^j \mathbf{g}_i)}-1\mid 1\leq i\leq k,1\leq j\leq q-1\right\}.
\end{align}
\end{theorem}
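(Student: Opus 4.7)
The plan is to prove the two inclusions $I_G+I_q\subseteq I_+(\Co)$ and $I_+(\Co)\subseteq I_G+I_q$ separately. The forward inclusion is essentially immediate: each generator $\bx^{\blacktriangle(\alpha^j \mathbf{g}_i)}-1$ of $I_G$ is literally a defining generator of $I_+(\Co)$ for the codeword $\alpha^j \mathbf{g}_i\in \Co$, while each defining relation of $I_q$ encodes an additive identity $\alpha^u+\alpha^v=\alpha^w$ (or $\alpha^u+\alpha^v=0$) in $\F_q$. Such an identity corresponds to the zero codeword via the crossing map, so it is captured by the generating relations of $I_+(\Co)$ as well.

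The reverse inclusion rests on a \emph{multiplicativity lemma} that I would prove first: for all $a,b\in\F_q^n$,
\begin{align*}
\bx^{\blacktriangle a}\cdot \bx^{\blacktriangle b}\equiv \bx^{\blacktriangle(a+b)}\pmod{I_q},
\end{align*}
where $a+b$ denotes pointwise addition in $\F_q^n$. The proof is position by position: at coordinate $i$, with $a_i=\alpha^u$ and $b_i=\alpha^v$, the product $x_{iu}x_{iv}$ reduces modulo $I_q$ to $x_{iw}$ when $\alpha^u+\alpha^v=\alpha^w\ne 0$ and to $1$ when $\alpha^u+\alpha^v=0$; the cases with $a_i=0$ or $b_i=0$ are trivial. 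With the lemma in hand, take a defining generator $\bx^{\blacktriangle a}-\bx^{\blacktriangle b}$ of $I_+(\Co)$ and decompose $a-b\in\Co$ as $\sum_{l\in S}\alpha^{j_l}\mathbf{g}_l$ with $S\subseteq\{1,\ldots,k\}$. Iterating the lemma,
\begin{align*}
\bx^{\blacktriangle a}\equiv \bx^{\blacktriangle b}\cdot \prod_{l\in S}\bx^{\blacktriangle(\alpha^{j_l}\mathbf{g}_l)}\pmod{I_q},
\end{align*}
and since each factor $\bx^{\blacktriangle(\alpha^{j_l}\mathbf{g}_l)}$ is congruent to $1$ modulo $I_G$, the product collapses, giving $\bx^{\blacktriangle a}-\bx^{\blacktriangle b}\in I_G+I_q$.

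The main obstacle is the multiplicativity lemma. Conceptually it says that the assignment $a\mapsto \bx^{\blacktriangle a}+I_q$ is a monoid homomorphism from $(\F_q^n,+)$ into the multiplicative monoid of $\KK[\bx]/I_q$, that is, the relations defining $I_q$ are precisely those needed to reproduce the $\F_q$-addition table coordinate by coordinate. Proving it cleanly requires a case analysis on whether intermediate sums $\alpha^u+\alpha^v$ vanish and an argument that iterated reductions are consistent, the latter following from the associativity and commutativity of $\F_q$-addition. Once the lemma is secured, the remainder is essentially bookkeeping on how an arbitrary codeword decomposes into scalar multiples of the rows $\mathbf{g}_i$.
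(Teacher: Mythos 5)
The paper itself does not prove this theorem --- it is imported from \cite{martcodeideal} --- but it does isolate, in Lemma~\ref{lem-reducemonomial}, the statement $\bx^{\blacktriangle a+\blacktriangle b}\equiv\bx^{\blacktriangle(a+b)}\bmod I_q$ and remarks that this is precisely what is implicitly used in the cited proof. Your ``multiplicativity lemma'' is exactly that lemma, and your reverse inclusion --- write $a-b\in\Co$ as $\sum_{l\in S}\alpha^{j_l}\mathbf{g}_l$, apply the lemma repeatedly to get $\bx^{\blacktriangle a}\equiv\bx^{\blacktriangle b}\prod_{l\in S}\bx^{\blacktriangle(\alpha^{j_l}\mathbf{g}_l)}\bmod I_q$, then telescope the product against the generators of $I_G$ --- is sound and matches the intended argument.

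The genuine gap is in the forward inclusion, at the step $I_q\subseteq I_+(\Co)$. Under the definition~(\ref{eq-otherIc}) used here, the generators of $I_+(\Co)$ are differences $\bx^{\blacktriangle a}-\bx^{\blacktriangle b}$ of monomials in the image of the crossing map, i.e.\ squarefree monomials containing at most one variable from each block $\bx_i$. The monomial $x_{iu}x_{iv}$ with $u\neq v$ is not of this form, so $x_{iu}x_{iv}-x_{iw}$ is \emph{not} a defining generator, and ``corresponds to the zero codeword'' does not put it in the ideal: the generators attached to the zero codeword are the trivial binomials $\bx^{\blacktriangle a}-\bx^{\blacktriangle a}=0$. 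Getting the block-$i$ relations of $I_q$ into $I_+(\Co)$ genuinely requires nonzero codewords with nonzero $i$th coordinate. For instance, for $\Co=\langle(1,1)\rangle$ over $\F_3$ one obtains $x_{11}x_{12}-1$ only by first deriving the cross-block relation $x_{12}-x_{21}$ (from writing the codeword $(1,1)$ as $(1,0)-(0,2)$) and then reducing against the generator $x_{11}x_{21}-1$. Indeed, if some coordinate $i$ vanishes on all of $\Co$, the block-$i$ relations of $I_q$ do not lie in $I_+(\Co)$ at all, so the theorem tacitly assumes no identically-zero coordinates or else the broader definition in which $\bx^a-\bx^b$ is a generator whenever $\blacktriangledown(a-b)\in\Co$ (the hypothesis of Lemma~\ref{lem-reducemon}); under that broader definition your one-line justification would be fine. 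As it stands, you need either to adopt that definition explicitly or to supply the missing computation for $I_q\subseteq I_+(\Co)$.
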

Note that the binomials in $I_G$ are squarefree.
The next result exhibits the type of binomials which belong to a generalized code ideal.
\begin{lemma}\label{lem-reducemon}
Let $\Co$ be an $[n,k]$ code over $\F_q$ and let $\bx^a-\bx^b$ be a binomial in $\KK[\bx]$. 
If $\blacktriangledown(a-b)\in\Co$, then $\bx^a-\bx^b\in I_+(\Co)$.
\end{lemma}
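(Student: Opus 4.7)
The plan is to show that modulo $I_q$ every monomial $\bx^a$ collapses to the canonical monomial $\bx^{\blacktriangle\blacktriangledown(a)}$, i.e.\ that
\[
\bx^a \equiv \bx^{\blacktriangle\blacktriangledown(a)} \mod I_q
\]
for every $a \in \NN_0^{n(q-1)}$. Since the generators of $I_q$ in~(\ref{eq-relationsI}) involve only variables from a single block $\bx_i$, it suffices to argue block-by-block. Fixing $i$, I would induct on the total degree $d_i = \sum_{u=1}^{q-1} a_{iu}$ of the $i$-th block. The cases $d_i = 0$ and $d_i = 1$ follow immediately from the definitions of $\blacktriangle$ and $\blacktriangledown$. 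For $d_i \geq 2$ I would pick two (possibly equal) indices $u,v$ for which $x_{iu}$ and $x_{iv}$ both appear in the $i$-th block of $\bx^a$ and apply the generator $x_{iu}x_{iv} - x_{iw}$ (if $\alpha^u + \alpha^v = \alpha^w \neq 0$) or $x_{iu}x_{iv} - 1$ (if $\alpha^u + \alpha^v = 0$). The resulting monomial has strictly smaller total degree in block $i$, and its image under $\blacktriangledown$ in the $i$-th coordinate is unchanged because $\alpha^w = \alpha^u + \alpha^v$ (resp.\ $0 = \alpha^u + \alpha^v$); induction then yields the claim.

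With this normal form in hand, I would observe that $\blacktriangledown : \ZZ^{n(q-1)} \to \F_q^n$ is $\ZZ$-linear by inspection of its defining formula, so
\[
\blacktriangledown(a) - \blacktriangledown(b) = \blacktriangledown(a - b) \in \Co
\]
by hypothesis. Hence the binomial $\bx^{\blacktriangle\blacktriangledown(a)} - \bx^{\blacktriangle\blacktriangledown(b)}$ is one of the defining generators of $I_+(\Co)$ listed in~(\ref{eq-otherIc}). The lemma then follows by writing
\[
\bx^a - \bx^b = \bigl(\bx^a - \bx^{\blacktriangle\blacktriangledown(a)}\bigr) + \bigl(\bx^{\blacktriangle\blacktriangledown(a)} - \bx^{\blacktriangle\blacktriangledown(b)}\bigr) - \bigl(\bx^b - \bx^{\blacktriangle\blacktriangledown(b)}\bigr),
\]
where the outer summands lie in $I_q \subseteq I_+(\Co)$ by the normal form and the middle summand lies in $I_+(\Co)$ by the previous sentence.

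The main obstacle is the inductive bookkeeping in the normal form step, particularly handling the edge cases $u = v$ (where the relevant relation is $x_{iu}^2 - x_{iw}$ or $x_{iu}^2 - 1$) and $\alpha^u + \alpha^v = 0$ (which is unavoidable in characteristic $2$). Once the congruence $\bx^a \equiv \bx^{\blacktriangle\blacktriangledown(a)} \mod I_q$ is secured, the rest of the argument is essentially formal manipulation.
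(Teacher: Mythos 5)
Your proposal is correct and follows essentially the same route as the paper's proof: both reduce $\bx^a$ to the canonical monomial $\bx^{\blacktriangle\blacktriangledown a}$ modulo $I_q$ block by block (the paper does this in one step by noting each block collapses to $1$ or a single variable, which is your induction left implicit), then invoke linearity of $\blacktriangledown$ and the defining generators of $I_+(\Co)$, with the containment $I_q\subseteq I_+(\Co)$ from Thm.~\ref{thm-martShapeIc} closing the argument. No gaps.
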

\begin{proof}
Put $a'=\blacktriangledown a$ and $b'=\blacktriangledown b$. 
Since the mapping $\blacktriangledown$ is linear, $a'-b'=\blacktriangledown a-\blacktriangledown b=\blacktriangledown(a-b)$.
Suppose $a' - b'\in \Co$.
Then by definition,
$\bx^{\blacktriangle a'}-\bx^{\blacktriangle b'}\in I_+(\Co)$. 

Claim that
$\bx^a-\bx^b\equiv \bx^{\blacktriangle a'}-\bx^{\blacktriangle b'}\mod I_q$.
Indeed, write
$$\bx^{a}=\prod_{i=1}^{n} x_{i1}^{a_{i1}}x_{i2}^{a_{i2}}\cdots x_{i,q-1}^{a_{i,q-1}}.$$
Then the $i$th entry of the word $a'$ is $a_{i1}\alpha+a_{i2}\alpha^2+\dots+a_{i,q-1}\alpha^{q-1} =\beta_i$, 
where either $\beta_i=0$ or  $\beta_i=\alpha^{\ell_i}$ for some $1\leq\ell_i\leq q-1$.
It follows that
$x_{i1}^{a_{i1}}x_{i2}^{a_{i2}}\cdots x_{i,q-1}^{a_{i,q-1}}\equiv 1\mod I_q$ or
$x_{i1}^{a_{i1}}x_{i2}^{a_{i2}}\cdots x_{i,q-1}^{a_{i,q-1}}\equiv x_{i\ell_i}\mod I_q$.
Thus $\bx^{\blacktriangle a'}=\prod_{i=1\atop\beta_i\ne 0}^{n}x_{i\ell_i}$
and so $\bx^a\equiv \bx^{\blacktriangle a'}\mod I_q$. 
Applying the same argument to $\bx^b$ establishes the claim and so the assertion.
\end{proof}

Note that the mapping $\blacktriangle$ is not linear, since e.g.\ over $\F_5$,
\begin{align*}
 \blacktriangle(1,3)+\blacktriangle(1,1)&= 
\blacktriangle(\alpha^4,\alpha^3)+\blacktriangle(\alpha^4,\alpha^4)=(\mathbf{e}_4,\mathbf{e}_3)+(\mathbf{e}_4,\mathbf{e}_4)
\end{align*}
and
\begin{align*}
\blacktriangle\left((1,3)+(1,1)\right)&=\blacktriangle(2,4)=\blacktriangle(\alpha,\alpha^2)=(\mathbf{e}_1,\mathbf{e}_2).
\end{align*}

However, the operator $\blacktriangle$ applied to the exponent of a monomial is quasi-linear as described in the following.
\begin{lemma}\label{lem-reducemonomial}
For any vectors $a,b$ in $\F_q^n$, 
 \begin{align*}
  \bx^{\blacktriangle a+\blacktriangle b}\equiv \bx^{\blacktriangle(a+b)}\mod I_q.
 \end{align*}
\end{lemma}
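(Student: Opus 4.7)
The plan is to exploit the block structure of the variables $\bx_i = (x_{i1},\ldots,x_{i,q-1})$ and reduce the congruence to a position-wise verification. Since the generators of $I_q$ listed in~(\ref{eq-relationsI}) involve only variables from a single block $\bx_i$, and since both monomials $\bx^{\blacktriangle a+\blacktriangle b}$ and $\bx^{\blacktriangle(a+b)}$ factor as products $\prod_{i=1}^{n} M_i$ and $\prod_{i=1}^{n} N_i$ respectively, where $M_i$ and $N_i$ involve only variables from the $i$th block, it is enough to show that $M_i\equiv N_i\mod I_q$ for each $i$ separately.

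To establish this, I would fix an index $i$ and perform a case analysis on the pair $(a_i,b_i)\in\F_q^2$. If $a_i=b_i=0$, then the $i$th blocks of $\blacktriangle a$, $\blacktriangle b$ and $\blacktriangle(a+b)$ are all zero, hence $M_i=N_i=1$. If exactly one of $a_i,b_i$ is zero, say $a_i=0$ and $b_i=\alpha^v$, then the $i$th block of $\blacktriangle a+\blacktriangle b$ equals $\mathbf{e}_v$, which is also the $i$th block of $\blacktriangle(a+b)$, so $M_i=N_i=x_{iv}$. Finally, if $a_i=\alpha^u$ and $b_i=\alpha^v$ are both nonzero, then $M_i=x_{iu}x_{iv}$ while $(a+b)_i=\alpha^u+\alpha^v$ is either $0$ or equals $\alpha^w$ for some $1\le w\le q-1$. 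In the former case $N_i=1$ and $x_{iu}x_{iv}-1$ is a generator of $I_q$; in the latter case $N_i=x_{iw}$ and $x_{iu}x_{iv}-x_{iw}$ is a generator of $I_q$. Either way, the required congruence holds in the $i$th block.

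Multiplying the block-wise congruences together then yields $\bx^{\blacktriangle a+\blacktriangle b}\equiv\bx^{\blacktriangle(a+b)}\mod I_q$. I do not expect any real obstacle here: the lemma essentially records the observation that the defining relations of $I_q$ encode the additive structure of $\F_q$ coordinate by coordinate, an observation that was already used implicitly in the proof of Lemma~\ref{lem-reducemon}. The only delicate point is to keep track of which of the two types of generators in~(\ref{eq-relationsI}) applies, and this is dictated precisely by whether $\alpha^u+\alpha^v$ vanishes in $\F_q$.
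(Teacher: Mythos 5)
Your proof is correct and follows essentially the same route as the paper's: both reduce the congruence coordinate-by-coordinate and invoke the two types of generators of $I_q$ according to whether $\alpha^u+\alpha^v$ vanishes. The only difference is that you spell out the cases where $a_i$ or $b_i$ is zero, which the paper dismisses with ``the more general case can be similarly handled.''
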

\begin{proof}
Let $a=\left(\alpha^{i_1},\dots,\alpha^{i_n}\right)$ and $b=\left(\alpha^{j_1},\dots,\alpha^{j_n}\right)$.
Assume that all entries in $a$ and $b$ are non-zero; the more general case can be similarly handled.
Put $a+b=\left(\alpha^{k_1},\dots,\alpha^{k_n}\right)$ and assume that the zero entries are at the positions in the set $J\subseteq\{1,\ldots,n\}$, 
i.e., $\alpha^{i_s}+\alpha^{j_s}=\alpha^{k_s}$ for $s\notin J$ and $\alpha^{i_s}+\alpha^{j_s}=0$ for $s\in J$. 
The binomials $x_{s,i_s}x_{s,j_s}-x_{s,k_s}$ for $s\notin J$ and $x_{s,i_s}x_{s,j_s}-1$ for $s\in J$ 
belong to $I_q$.
Moreover,
$$\bx^{\blacktriangle a+\blacktriangle b} = \prod_{s=1}^{n} \bx_s^{\mathbf{e}_{i_s}+\mathbf{e}_{j_s}} =\prod_{s=1}^n x_{s,i_s}x_{s,j_s}$$
and
$$\bx^{\blacktriangle(a+b)}=\prod_{s\in J\setminus\underline{n}} \bx_s^{\mathbf{e}_{k_s}} =\prod_{s\in J\setminus\underline{n}}x_{s,k_s}.$$
But $x_{s,i_s}x_{s,j_s}\equiv x_{s,k_s} \mod I_q$ for $s\in J\setminus\underline{n}$ and $x_{s,i_s}x_{s,j_s}\equiv 1 \mod I_q$ for $s\in J$.
By comparing both equations, the result follows.
\end{proof}
Note that this result has been implicitly used in~\cite[Theorem 2.1]{martcodeideal}.

\subsection{The generalized code ideal for codes over prime fields}

For a binary $[n,k]$ code $\Co$, the generalized code ideal equals the code ideal.
To see this, note that $\F_2=\{0,\alpha=1\}$ and the generalized code ideal
can be considered as an ideal in $\KK[x_1,\dots,x_n]$ instead of $\KK[x_{11},\dots,x_{n1}]$.

Moreover, if $G$ is a generator matrix for $\Co$ with rows $\mathbf{g}_1,\ldots,\mathbf{g}_k$,
then the code ideal $I(\Co)$ has the generating set~\cite[Theorem 3.2]{marmartinez}
\begin{align*}
\left\{\bx^{\mathbf{g}_i}-1\mid 1\leq i\leq k\right\}\cup\left\{x_i^2-1\mid 1\leq i\leq n\right\}. 
\end{align*}
By Thm.~\ref{thm-martShapeIc}, the generalized code ideal $I_+(\Co)$ has the same generating set.
\medskip

Now let $\Co$ be an $[n,k]$ code over a finite field $\F_p$, where $p>2$ is a prime.
Recall that $\bx=\left(\bx_1,\dots,\bx_n\right)$ and $\bx_j=\left(x_{j1},\dots,x_{j,p-1}\right)$
for $1\leq j\leq n$. 
Moreover, put $\underline{\bx}_i=(x_{1i},\dots,x_{ni})$ for $1\leq i\leq q-1$. 
The generalized code ideal belongs to the ring $\KK[\bx]=\KK[x_{11},\dots,x_{n,p-1}]$ whereas the code ideal 
\begin{align}
I(\Co)=\left\langle\underline{\bx}_i^a-\underline{\bx}_i^b\mid a-b\in\Co\right\rangle\label{eq-usualIc}
\end{align}
can be considered to belong to $\KK[\underline{\bx}_i]\subset\KK[\bx]$ for any $1\leq i\leq p-1$.
\begin{proposition}
 Let $\mathcal{C}$ be a linear code of length $n$ over a prime field $\mathbb{F}_p$. 
The code ideal $I(\mathcal{C})$ as defined in (\ref{eq-usualIc}) is an elimination ideal
of the ideal $I_+(\Co)$ as defined in (\ref{eq-otherIc}). More precisely,
$$I(\Co)=I_+(\Co)\cap\KK[\underline{\bx}_i]\quad\mbox{for each }1\leq i\leq q-1.$$
\end{proposition}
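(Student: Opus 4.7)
The plan is to prove the two inclusions $\supseteq$ and $\subseteq$ separately. For $I(\Co) \subseteq I_+(\Co) \cap \KK[\underline{\bx}_i]$, it suffices to show every generator $\underline{\bx}_i^a - \underline{\bx}_i^b$ of $I(\Co)$ (with $a-b\in\Co$) lies in $I_+(\Co)$, since these generators visibly belong to $\KK[\underline{\bx}_i]$. View the exponent vector of $\underline{\bx}_i^a$ in $\NN_0^{n(p-1)}$: it is nonzero only in the $i$-th slot of each block, so $\blacktriangledown$ returns $\alpha^i a \in \F_p^n$, and analogously $\alpha^i b$ for $\underline{\bx}_i^b$. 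By linearity of $\blacktriangledown$ one obtains $\alpha^i(a-b) \in \Co$ (using scalar closure of the subspace $\Co$), and Lemma \ref{lem-reducemon} yields $\underline{\bx}_i^a - \underline{\bx}_i^b \in I_+(\Co)$.

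For the reverse inclusion, the plan is to construct a $\KK$-algebra homomorphism $\psi : \KK[\bx] \to \KK[\underline{\bx}_i]$ satisfying (i) $\psi$ restricts to the identity on $\KK[\underline{\bx}_i]$, and (ii) $\psi(I_+(\Co)) \subseteq I(\Co)$. Once these are in place, any $f \in I_+(\Co) \cap \KK[\underline{\bx}_i]$ gives $f = \psi(f) \in I(\Co)$, finishing the argument.

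To define $\psi$, set $\psi(x_{jk}) = x_{ji}^{c_k}$, where $c_k \in \{1,\dots,p-1\}$ is the integer representative of the field element $\alpha^{k-i} \in \F_p^*$ (well-defined because $\alpha$ is a unit in $\F_p$). Property (i) is immediate since $c_i = 1$. For (ii), work with the defining generators $\bx^{\blacktriangle a}-\bx^{\blacktriangle b}$ of $I_+(\Co)$. Writing $a_j = \alpha^{\ell_j}$ whenever $a_j \neq 0$, a direct unwinding shows $\psi(\bx^{\blacktriangle a}) = \underline{\bx}_i^{\alpha^{-i}a}$, with exponents taken as integer representatives in $\{0,\dots,p-1\}$. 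Hence $\psi(\bx^{\blacktriangle a}-\bx^{\blacktriangle b}) = \underline{\bx}_i^{\alpha^{-i}a} - \underline{\bx}_i^{\alpha^{-i}b}$, which lies in $I(\Co)$ because $\alpha^{-i}(a-b) \in \Co$ by scalar closure.

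The main subtlety is the choice of exponent shift in $\psi$. The naive definition $\psi(x_{jk}) = x_{ji}^{\alpha^k}$ still sends $I_+(\Co)$ into $I(\Co)$, but it fails (i), which would force an additional step to recover $f$ from $\psi(f)$ modulo $I_p$ by inverting the resulting substitution $z_j \mapsto z_j^{\alpha^i}$. Dividing the exponent by $\alpha^i$ (possible since $\alpha \in \F_p^*$) produces a single homomorphism satisfying (i) and (ii) simultaneously, after which the backward inclusion drops out with no further work. The remaining verifications are routine bookkeeping with the definitions of $\blacktriangle$, $\blacktriangledown$, and the generators of $I(\Co)$.
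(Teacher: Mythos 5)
Your proof is correct, and your forward inclusion $I(\Co)\subseteq I_+(\Co)\cap\KK[\underline{\bx}_i]$ is essentially the paper's own argument: identify $\underline{\bx}_i^a-\underline{\bx}_i^b$ with a binomial $\bx^{a'}-\bx^{b'}$ supported on the $i$-th slot of each block, observe that $\blacktriangledown(a'-b')=\alpha^i(a-b)\in\Co$, and invoke Lemma~\ref{lem-reducemon}. The reverse inclusion is where you genuinely diverge. The paper takes an arbitrary \emph{binomial} $\bx^a-\bx^b$ in the intersection, reads off that $\blacktriangledown(a-b)=\alpha^i(a-b)\in\Co$, and rescales by $\alpha^{p-1-i}$; this silently relies on two facts it does not establish, namely that the elimination ideal of the binomial ideal $I_+(\Co)$ is again generated by binomials (so that checking binomials suffices), and the converse of Lemma~\ref{lem-reducemon} (that every binomial of $I_+(\Co)$ corresponds to a codeword under $\blacktriangledown$). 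Your retraction $\psi:\KK[\bx]\to\KK[\underline{\bx}_i]$, $x_{jk}\mapsto x_{ji}^{c_k}$ with $c_k$ the integer representative of $\alpha^{k-i}$, sidesteps both issues: it fixes $\KK[\underline{\bx}_i]$ pointwise, sends each defining generator $\bx^{\blacktriangle a}-\bx^{\blacktriangle b}$ to $\underline{\bx}_i^{\alpha^{-i}a}-\underline{\bx}_i^{\alpha^{-i}b}\in I(\Co)$, and hence carries all of $I_+(\Co)$ into $I(\Co)$, so that any $f$ in the intersection is recovered as $f=\psi(f)\in I(\Co)$. This treats arbitrary elements of the elimination ideal rather than only binomials and is the more self-contained argument; what the paper's route buys is brevity and a direct reading of which codeword a given binomial of $I_+(\Co)$ corresponds to.
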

\begin{proof}
 Let $\underline{\bx}_i^a-\underline{\bx}_i^b\in I(\Co)$, i.e., $a-b\in\Co$. 
Clearly, 
$\underline{\bx}_i^a-\underline{\bx}_i^b=\bx^{a'}-\bx^{b'}$,
where $a'=(a_1 \mathbf{e}_{i},\dots,a_n \mathbf{e}_{i})$ and $b'=(b_1 \mathbf{e}_{i},\dots,b_n \mathbf{e}_{i})$. 
Furthermore,
$\blacktriangledown(a'-b')=\alpha^i (a-b)\in\Co$ and so by Lem.~\ref{lem-reducemon},
$\underline{\bx}_i^a-\underline{\bx}_i^b\in I_+(\Co)\cap\KK[\underline{\bx}_i]$.

Conversely, let $\bx^{a}-\bx^{ b}$ be a binomial in $I_+(\Co)\cap\KK[\underline{\bx}_i] $. Clearly, 
$a- b$ must be of the form 
$((a_1-b_1)\mathbf{e}_i,(a_2-b_2)\mathbf{e}_i,\dots,(a_n-b_n)\mathbf{e}_i)$ with
$$\blacktriangledown \left(a-b\right)=\alpha^i\cdot(a-b)\in\Co.$$
But as $\Co$ is linear, $\alpha^{p-i-1}(\alpha^i(a-b))=a-b\in\Co$
and so by convention, $\bx^{a}-\bx^{b}=\underline{\bx}_i^a-\underline{\bx}_i^b\in I(\Co).$
\end{proof}

\begin{example}\label{ex-1}
Consider the ternary $[6,3]$ code $\Co$ generated by the matrix
\begin{align*}
 G=\begin{pmatrix}
    1&0&0&2&2&0\\0&1&0&1&1&0\\0&0&1&1&2&1
   \end{pmatrix}.
\end{align*}
By Thm.~\ref{thm-martShapeIc}, the generalized code ideal $I_+(\Co)$ has the generators
$$\begin{array}{lll}
x_{12}x_{41}x_{51}-1, & x_{11}x_{42}x_{52}-1,       &  x_{22}x_{42}x_{52}-1, \\  
x_{21}x_{41}x_{51}-1, & x_{32}x_{42}x_{51}x_{62}-1, & x_{31}x_{41}x_{52}x_{61}-1 
\end{array}$$
and 
$$ x_{i1}^2-x_{i2}, \; x_{i1}x_{i2}-1,\; x_{i2}^2-x_{i1},  \quad 1\leq i\leq 6.$$
Computations in {\tt Singular}~\cite{DGPS} show that the elimination ideal 
$$I_+(\Co)\cap\KK[x_{11},x_{21},x_{31},x_{41},x_{51},x_{61}]$$
is generated by the binomials
\begin{align*}
x_{61}^3-1,\, x_{51}^3-1,\, x_{41}^3-1,\, x_{31}-x_{41}^2x_{51}x_{61}^2,\, x_{21}-x_{41}^2x_{51}^2,\, x_{11}-x_{41}x_{51}  .
\end{align*}
Similarly, the elimination ideal 
$$I_+(\Co)\cap\KK[x_{12},x_{22},x_{32},x_{42},x_{52},x_{62}]$$
is generated by
\begin{align*}
x_{62}^3-1,\, x_{52}^3-1,\, x_{42}^3-1,\, x_{32}-x_{42}^2x_{52}x_{62}^2,\, x_{22}-x_{42}^2x_{52}^2,\, x_{12}-x_{42}x_{52}.
\end{align*}
Comparing these generators with the reduced Gröbner basis for the code ideal $I(\Co)$ given in~(\ref{eq-IclexGB}) confirms 
that both elimination ideals are (up to renaming of variables) equal to $I(\Co)$. 
\end{example}

Next we show that the reduced Gröbner basis for a generalized code ideal w.r.t.\ the lex ordering 
can be easily constructed from a standard generator matrix.

\begin{theorem}\label{thm-lexGB}
Let $\Co$ be an $[n,k]$ code over a prime field $\F_p$ with primitive element $\alpha$ and 
let $\mathbf{g}_1,\ldots,\mathbf{g}_k$ be the row vectors of a generator matrix for $\Co$ in standard form. 
The reduced Gröbner basis for the generalized code ideal $I_+(\Co)$ w.r.t.\ the lex ordering 
$x_{11}\succ x_{12}\succ \ldots \succ x_{n,p-1}$
is given by
\begin{align}
 \mathcal{G}=
&\left\{x_{ij}-\underline{\bx}_{p-1}^{\mathbf{m}_i^{(j)}}\mid 1\leq i\leq k,\,1\leq j\leq p-1\right\}\label{eq-lexGb1}\\
&\cup\left\{x_{ij}-x_{i,p-1}^{ \alpha^j}\mid k+1\leq i\leq n,\, 1\leq j\leq p-2\right\}\label{eq-lexGb2}\\
&\cup\left\{x_{i,p-1}^p-1\mid k+1\leq i\leq n\right\}\label{eq-lexGb3}
\end{align}
where 
\begin{align}
\mathbf{m}_i^{(j)}&=\left(\mathbf{e}_i-\mathbf{g}_i\right)\alpha^j,\quad 1\leq i\leq k,\,1\leq j\leq p-1.
\end{align}
\end{theorem}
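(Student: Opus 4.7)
The plan is to follow a standard four-step verification: (i)~confirm $\mathcal{G}\subseteq I_+(\Co)$; (ii)~show $\mathcal{G}$ is a Gröbner basis of $\langle\mathcal{G}\rangle$ via the S-polynomial criterion; (iii)~upgrade this to $\langle\mathcal{G}\rangle = I_+(\Co)$ by a dimension count; (iv)~verify reducedness. For (i), the families~(\ref{eq-lexGb2}) and~(\ref{eq-lexGb3}) lie in $I_q$: since $x_{i,p-1}$ encodes $\alpha^{p-1}=1\in\F_p$, iterating the multiplicative relations in~(\ref{eq-relationsI}) yields $x_{i,p-1}^{\alpha^j}\equiv x_{ij}$ and $x_{i,p-1}^p\equiv 1\mod I_q$. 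For~(\ref{eq-lexGb1}) I would apply Lem.~\ref{lem-reducemon}: the exponent difference of $x_{ij}-\underline{\bx}_{p-1}^{\mathbf{m}_i^{(j)}}$ is $\mathbf{e}_{i,j}-\sum_{l>k}(\alpha^j\mathbf{m}_i)_l\,\mathbf{e}_{l,p-1}$, and since $\blacktriangledown$ is linear with $\mathbf{e}_{i,j}\mapsto\alpha^j\mathbf{e}_i$ and $\mathbf{e}_{l,p-1}\mapsto\mathbf{e}_l$, its image is $\alpha^j(\mathbf{e}_i-\mathbf{m}_i)=\alpha^j\mathbf{g}_i\in\Co$.

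For (ii), inspection under the lex order shows the leading terms are $x_{ij}$ in~(\ref{eq-lexGb1}) and~(\ref{eq-lexGb2}) and $x_{i,p-1}^p$ in~(\ref{eq-lexGb3}); a short case analysis (indices $i\le k$ versus $i>k$, and within $i>k$ the split between $j<p-1$ and $j=p-1$) confirms that any two distinct leading monomials involve disjoint variables, so every $S$-polynomial reduces to zero by the coprime-leading-term criterion and $\mathcal{G}$ is a Gröbner basis of the ideal it generates. For~(iii) the complement of $\lt_\succ(\langle\mathcal{G}\rangle)$ consists exactly of the $p^{n-k}$ monomials $\prod_{l=k+1}^n x_{l,p-1}^{a_l}$ with $0\le a_l\le p-1$, so $\dim_\KK\KK[\bx]/\langle\mathcal{G}\rangle=p^{n-k}$. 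To establish the matching bound $\dim_\KK\KK[\bx]/I_+(\Co)\le p^{n-k}$ I would reduce an arbitrary monomial in two stages: first iterate Lem.~\ref{lem-reducemonomial} to obtain $\bx^u\equiv\bx^{\blacktriangle c}\mod I_q$ for some $c\in\F_p^n$; then invoke Lem.~\ref{lem-reducemon} to replace $c$ by the unique systematic representative of the coset $c+\Co$, which has $c_i=0$ for $i\le k$, leaving at most $p^{n-k}$ residues. The two bounds, combined with $\langle\mathcal{G}\rangle\subseteq I_+(\Co)$, force equality.

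Finally (iv) is immediate: every leading coefficient equals $1$, and every non-leading term of $\mathcal{G}$ is either $1$ or a monomial in $\{x_{l,p-1}\mid l>k\}$ with each exponent strictly less than $p$, so it is divisible neither by any $x_{ij}$ with $i\le k$ or $j<p-1$, nor by $x_{i,p-1}^p$. I expect the main obstacle to be the dimension upper bound in step~(iii): it is the only place where the two lemmas must be chained in the right order, and it crucially relies on the uniqueness of the systematic coset representative afforded by the standard-form generator matrix, since any slack in this two-stage reduction would immediately break the dimension match on which the whole argument hinges.
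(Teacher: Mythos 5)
Your steps (i), (ii) and (iv) are sound and essentially reproduce the first half of the paper's proof: membership of $\mathcal{G}$ in $I_+(\Co)$ via Lem.~\ref{lem-reducemon} and the relations in $I_q$, and the observation that the leading terms are pairwise coprime, so that $\mathcal{G}$ is automatically the reduced Gröbner basis of whatever ideal it generates. The problem is step (iii), which you yourself flag as the crux: your inequality points the wrong way. From $\langle\mathcal{G}\rangle\subseteq I_+(\Co)$ and $\dim_\KK\KK[\bx]/\langle\mathcal{G}\rangle=p^{n-k}$ one already gets $\dim_\KK\KK[\bx]/I_+(\Co)\le p^{n-k}$ for free, so your two-stage reduction (which correctly re-derives exactly this upper bound) adds nothing. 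To force $\langle\mathcal{G}\rangle=I_+(\Co)$ you need the reverse estimate $\dim_\KK\KK[\bx]/I_+(\Co)\ge p^{n-k}$, i.e., that the $p^{n-k}$ monomials $\bx^{\blacktriangle c}$ attached to the systematic coset representatives remain linearly independent modulo $I_+(\Co)$. As written, nothing in your argument rules out, say, $I_+(\Co)=\KK[\bx]$, which satisfies both of your bounds while being strictly larger than $\langle\mathcal{G}\rangle$; the uniqueness of the systematic representative, which you identify as the delicate point, is not where the difficulty lies.

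The gap is fixable in two ways. The paper's route avoids dimension counting altogether: it takes the generating set $I_+(\Co)=I_G+I_q$ from Thm.~\ref{thm-martShapeIc} and exhibits, by an explicit case analysis, each generator of $I_G$ and of $I_q$ as a polynomial combination of elements of $\mathcal{G}$, reducing modulo the subideals generated by the families (\ref{eq-lexGb2}) and (\ref{eq-lexGb3}). Alternatively, you can keep your dimension count but supply the missing lower bound: grade $\KK[\bx]$ by the group $\F_p^n/\Co$ via $\deg x_{i\ell}=\alpha^\ell\mathbf{e}_i+\Co$; every generator of $I_+(\Co)$ is a difference of two monomials of the same degree, so $I_+(\Co)$ is homogeneous for this grading, and since every element of $I_+(\Co)$ vanishes at the all-ones point no monomial can lie in $I_+(\Co)$; hence each of the $p^{n-k}$ graded components of $\KK[\bx]/I_+(\Co)$ is nonzero and $\dim_\KK\KK[\bx]/I_+(\Co)\ge p^{n-k}$. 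Either repair closes your argument; without one of them it does not.
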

\begin{proof}
 Note that the support of each vector $\mathbf{m}_i^{(j)}$ lies in $\{k+1,\dots,n\}$.
Thus the monomial  $\underline{\bx}_{p-1}^{\mathbf{m}_i^{(j)}}$ involves only 
the variables $x_{k+1,p-1},x_{k+2,p-1},\dots,x_{n,p-1}$.
It follows that the second terms do not involve any of the leading terms. 
Moreover,  different binomials in $\mathcal{G}$ have relatively prime leadings terms.
Hence, $\mathcal{G}$ is the reduced Gröbner basis w.r.t.\ lex ordering for the ideal it generates.

It remains to show that $\mathcal{G}$ generates the ideal $I_+(\Co)$.
First, claim that $\mathcal{G}\subset I_+(\Co)$.
Indeed, consider the following cases: 
\begin{itemize}
\item 
Take a binomial $x_{ij}-\underline{\bx}_{p-1}^{\mathbf{m}_i^{(j)}}$ from the subset~(\ref{eq-lexGb1}). 
Applying the mapping $\blacktriangledown$ to the exponents of the involved monomials yields
$\alpha^j\mathbf{e}_i$ and $\alpha^{p-1}\mathbf{m}_i^{(j)}=\mathbf{m}_i^{(j)}$. 
But $\alpha^j\mathbf{e}_i-\mathbf{m}_i^{(j)}=\alpha^j\mathbf{g}_i\in\Co$
and so by Lem.~\ref{lem-reducemon} the considered binomial belongs to $I_+(\Co)$.
\item
Consider a binomial $x_{ij}-x_{i,p-1}^{\alpha^j}$ from the subset~(\ref{eq-lexGb2}).
Applying the mapping $\blacktriangledown$ to the exponents of the monomials in this binomial 
gives $\alpha^j\mathbf{e}_i-\alpha^{p-1}\alpha^j\mathbf{e}_i=\mathbf{0}\in\Co$ and thus
by Lem.~\ref{lem-reducemon} this binomial lies in $I_+(\Co)$.
\item 
Pick a binomial $x_{i,p-1}^p-1$ from the subset~(\ref{eq-lexGb3}).
It obviously corresponds to the zero word and therefore belongs to $I_+(\Co)$.
\end{itemize}
Second, claim that $I_+(\Co)\subset\left\langle\mathcal{G}\right\rangle$. 
Indeed, put
$$J = \left\langle x_{ij}-x_{i,p-1}^{ \alpha^j}\mid k+1\leq i\leq n,\, 1\leq j\leq p-2\right\rangle$$
and
$$K =\left\langle x_{i,p-1}^p-1\mid k+1\leq i\leq n\right\rangle.$$
Consider the following cases:
\begin{itemize}
\item First we prove 
that the binomials in~$I_G$ are generated by the binomials in $\mathcal{G}$:  
For this, consider the binomial $\bx^{\blacktriangle (\alpha^j\mathbf{g}_i)}-1$ 
for some $1\leq i\leq k$ and $1\leq j\leq p-1$.
By definition, $\alpha^j\mathbf{g}_i=\alpha^j\mathbf{e}_i-\mathbf{m}_{i}^{(j)}$.
Claim that
$$\bx^{\blacktriangle (\alpha^j\mathbf{g}_i)}-1 
\equiv\bx^{\blacktriangle\left(-\mathbf{m}_{i}^{(j)}\right)}
\left(x_{ij}-\underline{\bx}_{p-1}^{\mathbf{m}_{i}^{(j)}}\right)\mod J.$$
Indeed, 
$$\bx^{\blacktriangle \left(\alpha^j\mathbf{g}_i\right)}
= \bx^{\blacktriangle \left(\alpha^j\mathbf{e}_i\right)}\bx^{\blacktriangle \left(-\mathbf{m}_i^{(j)}\right)}
= x_{ij}\bx^{\blacktriangle \left(-\mathbf{m}_i^{(j)}\right)}.$$
Moreover, the squarefree monomial $\bx^{\blacktriangle\left(-\mathbf{m}_{i}^{(j)}\right)}$ has
$\supp\left(\mathbf{m}_{i}^{(j)}\right)\subseteq\{k+1,\dots,n\}$ and so only involves the variables $\bx_{k+1},\dots,\bx_{n}$.
If the variable $x_{st}$ for some $k+1\leq s\leq n$ and $1\leq t\leq p-1$ is involved  
in $\bx^{\blacktriangle\left(-\mathbf{m}_{i}^{(j)}\right)}$, 
then the $s$-th coordinate of $\mathbf{m}_i^{(j)}$, say $\alpha^m$, is non-zero 
and satisfies $-\alpha^m=\alpha^t$.
Hence, the monomial $\underline{\bx}_{p-1}^{\mathbf{m}_{i}^{(j)}}$ contains 
the variable $x_{s,p-1}^{\alpha^m}$. 

Two cases occur: If $1\leq t\leq p-2$, then $x_{st}-x_{s,p-1}^{\alpha^t}\in\mathcal{G}$ and thus
\begin{align*}
x_{st}x_{s,p-1}^{\alpha^m}\equiv x_{s,p-1}^{\alpha^t}x_{s,p-1}^{\alpha^m}
= x_{s,p-1}^{\alpha^t+\alpha^m}
\equiv x_{s,p-1}^{0}=1\mod J+K.
\end{align*}
Otherwise, $t=p-1$ and then
\begin{align*}
x_{s,p-1}x_{s,p-1}^{\alpha^m}=x_{s,p-1}^{\alpha^m+1}\equiv x_{s,p-1}^0
=1\mod J+K.
\end{align*}
Therefore, both cases provide
$\bx^{\blacktriangle\left(- \mathbf{m}_{i}^{(p-1)}\right)}\underline{\bx}_{p-1}^{\mathbf{m}_{i}^{(p-1)}}
\equiv 1\mod J+K$.

\item 
Second we prove that the binomials in~$I_q$ whose second term is unequal to 1 
are generated by the binomials in $\mathcal{G}$. 
For this, let $\alpha^u+\alpha^v=\alpha^w$ with $\alpha^u,\alpha^v,\alpha^w\neq 0$ and consider the following cases:
\begin{itemize}
 \item Let $1\leq i\leq k$. We show that $x_{iu}x_{iv}-x_{iw}\in\left\langle\mathcal{G}\right\rangle$.
Take the following polynomial which obviously belongs to $\left\langle\mathcal{G}\right\rangle$,
\begin{align*}
&\left(x_{iw}-\underline{\bx}_{p-1}^{\mathbf{m}_i^{(w)}}\right) 
-x_{iv}\left(x_{iu}-\underline{\bx}_{p-1}^{\mathbf{m}_i^{(u)}}\right)
-\underline{\bx}_{p-1}^{\mathbf{m}_i^{(u)}}\left(x_{iv}-\underline{\bx}_{p-1}^{\mathbf{m}_i^{(v)}}\right)\\
&=x_{iw}-x_{iu}x_{iv}
-\left(\underline{\bx}_{p-1}^{\mathbf{m}_i^{(w)}}
-\underline{\bx}_{p-1}^{\mathbf{m}_i^{(u)}}\underline{\bx}_{p-1}^{\mathbf{m}_i^{(v)}}\right)\\
&\equiv x_{iw}-x_{iu}x_{iv}\mod K,
\end{align*}
where the last step follows from
\begin{align}\label{eq-auxMkuerzen}
\begin{split}
\underline{\bx}_{p-1}^{\mathbf{m}_i^{(u)}}\underline{\bx}_{p-1}^{\mathbf{m}_i^{(v)}}
&=\underline{\bx}_{p-1}^{(\mathbf{e}_i-\mathbf{g}_i)(\alpha^u+\alpha^v)}
\equiv\underline{\bx}_{p-1}^{(\mathbf{e}_i-\mathbf{g}_i)\alpha^w}
=\underline{\bx}_{p-1}^{\mathbf{m}_i^{(w)}} \mod K.
\end{split}
\end{align}
\item Let $k+1\leq i\leq n$.
We show that $x_{iu}x_{iv}-x_{iw}\in\left\langle\mathcal{G}\right\rangle$.
If $u,v,w\neq p-1$, then the following polynomial lies in $\left\langle\mathcal{G}\right\rangle$,
\begin{align*}
&\left( x_{iw}-x_{i,p-1}^{\alpha^w}\right)-x_{iu}\left(x_{iv}-x_{i,p-1}^{\alpha^v}\right)
-x_{i,p-1}^{\alpha^v}\left(x_{iu}-x_{i,p-1}^{\alpha^u}\right)\\
&=x_{iw}-x_{iu}x_{iv}-\left(x_{i,p-1}^{\alpha^w}-x_{i,p-1}^{\alpha^u}x_{i,p-1}^{\alpha^v}\right)\\
&\equiv x_{iu}-x_{iv}x_{iw}\mod K.
\end{align*}
If $v,w\neq p-1$ and $u=p-1$, then $\alpha^u+\alpha^v=1+\alpha^v=\alpha^w$ and
 the following polynomial is in $\left\langle\mathcal{G}\right\rangle$,
\begin{align*}
&\left( x_{iw}-x_{i,p-1}^{\alpha^w}\right)-x_{iu}\left(x_{iv}-x_{i,p-1}^{\alpha^v}\right)\\
&=x_{iw}-x_{iu}x_{iv}-\left(x_{i,p-1}^{\alpha^w}-x_{i,p-1}x_{i,p-1}^{\alpha^v}\right)\\
&\equiv x_{iw}-x_{iu}x_{iv}\mod K.
\end{align*}
The case $u,w\neq p-1$ and $v=p-1$ is analogous.

If $u,v\neq p-1$ and $w=p-1$, then $\alpha^u+\alpha^v=1$ and 
 the following polynomial is a member of $\left\langle\mathcal{G}\right\rangle$,
\begin{align*}
&x_{iu}\left(x_{iv}-x_{i,p-1}^{\alpha^v}\right)
+x_{i,p-1}^{\alpha^v}\left(x_{iu}-x_{i,p-1}^{\alpha^u}\right)\\
&=x_{iu}x_{iv}-x_{i,p-1}^{\alpha^u+\alpha^v}\\
&\equiv x_{iu}x_{iv}-x_{i,p-1}=x_{iu}x_{iv}-x_{iw}\mod K.
\end{align*}
If $u,v=p-1$ and $w\neq p-1$, then $\alpha^w=2$ and
\begin{align*}
 x_{iw}-x_{i,p-1}^{\alpha^w}=x_{iw}-x_{i,p-1}^2=x_{iw}-x_{iu}x_{iv}.
\end{align*}
The cases $u,w=p-1,v\neq p-1$ and $v,w=p-1, u\neq p-1$ cannot occur
since $\alpha^u,\alpha^v,\alpha^w\neq 0$. 
Similarly, the case $u,v,w=p-1$ is impossible.
\end{itemize}
\item 
Third we prove that the binomials in~$I_q$ whose second term is equal to~1 are generated by the binomials in $\mathcal{G}$. 
For this, let $\alpha^u+\alpha^v=0$ with $\alpha^u,\alpha^v\neq 0$ and consider the following cases:
\begin{itemize}
 \item Let $1\leq i\leq k$. Claim that $x_{iu}x_{iv}-1\in\left\langle\mathcal{G}\right\rangle$.
Indeed, calculating as in~(\ref{eq-auxMkuerzen}) gives
\begin{align*}
&x_{iv}\left(x_{iu}-\underline{\bx}_{p-1}^{\mathbf{m}_i^{(u)}}\right)
+\underline{\bx}_{p-1}^{\mathbf{m}_i^{(u)}}\left(x_{iv}-\underline{\bx}_{p-1}^{\mathbf{m}_i^{(v)}}\right)
\equiv x_{iu}x_{iv}-1\mod K.
\end{align*}
 \item Let $k+1\leq i\leq n$. Claim that $x_{iu}x_{iv}-1\in\left\langle\mathcal{G}\right\rangle$.
Indeed, if $u,v\neq p-1$, then 
\begin{align*}
&x_{iv}\left( x_{iu}-x_{i,p-1}^{\alpha^u}\right)-x_{i,p-1}^{\alpha^u}\left(x_{iv}-x_{i,p-1}^{\alpha^v}\right)
\equiv x_{iu}x_{iv}-1\mod K.
\end{align*}
If $u\neq p-1$ and $v=p-1$, then
\begin{align*}
&x_{iv}\left( x_{iu}-x_{i,p-1}^{\alpha^u}\right)\equiv x_{iu}x_{iv}-1\mod K.
\end{align*}
The case $u=p-1$ and $v\neq p-1$ is analogous and the case $u,v=p-1$ cannot occur.
\end{itemize}
\end{itemize}
\end{proof}

\begin{example}
Reconsider the ternary $[6,3]$ code $\Co$ given in Ex.~\ref{ex-1}.
For the associated generalized code ideal, we construct the reduced Gröbner basis w.r.t.\ the lex ordering according to Thm.~\ref{thm-lexGB}.
To this end, let $\F_3=\{0,\alpha=2,\alpha^2=1\}$ and so 
\begin{align*}
 \mathbf{m}_1^{(1)}=(0,0,0,2,2,0),\; \mathbf{m}_1^{(2)}=(0,0,0,1,1,0),\\
 \mathbf{m}_2^{(1)}=(0,0,0,1,1,0),\; \mathbf{m}_2^{(2)}=(0,0,0,2,2,0),\\
 \mathbf{m}_3^{(1)}=(0,0,0,1,2,1),\; \mathbf{m}_3^{(2)}=(0,0,0,2,1,2).
\end{align*}
Then the reduced Gröbner basis is given by
\begin{align*}
\mathcal{G}_\succ(I(\Co))&=\left\{x_{11}-x_{42}^2x_{52}^2,\, x_{12}-x_{42}^2x_{52}^2,\,
x_{21}-x_{42}x_{52},\, x_{22}-x_{42}^2x_{52}^2\right\}\\
&\cup\left\{x_{31}-x_{42}x_{52}^2x_{62},\, x_{32}-x_{42}^2x_{52}x_{62}^2,
x_{41}-x_{42}^2,\, x_{51}-x_{52}^2,\,x_{61}-x_{62}^2\right\}\\
&\cup\left\{ x_{42}^3-1,\,x_{52}^3-1,\,x_{62}^3-1\right\}. 
\end{align*}
\end{example}

\subsection{Gröbner bases for generalized code ideals}

The construction of the generalized code ideal $I_+(\Co)$ according to Thm.~\ref{thm-martShapeIc} can become quite tedious for larger problem instances. 
Indeed, for an $[n,k]$ code over $\F_q$ the generating set consists of $k(q-1)+n{q\choose 2}$ binomials.
In particular, if $q=p$ is a prime, Thm.~\ref{thm-lexGB} will provide an alternative generating set which is based on a standard generator matrix for the code and
consists of $k(p-1)+(n-k)(p-1)=n(p-1)$ binomials. 
This result can be extended to the general case. 
More precisely, the reduced Gröbner basis for the generalized code ideal $I_+(\Co)$ w.r.t.\ the lex ordering always consists of $n(q-1)$ binomials. 
 
To see this, let $q=p^r$, where $p$ is a prime and $r\geq 1$ is an integer.
The finite field $\F_q$ can be considered as a vector space over $\F_p$ with basis elements $1,\alpha,\alpha^2,\dots,\alpha^{r-1}$. 
Taking $\alpha$ as a primitive element of $\F_q$ and replacing $\alpha$ by $\alpha^{-1}$ gives 
another basis $\alpha^{q-r},\dots,\alpha^{q-2},\alpha^{q-1}$ for $\F_q$.
That is, each element $\beta$ of $\F_q$ can be uniquely written as
\begin{align}
 \beta=b_1\alpha^{q-r}+\dots+b_{r-1}\alpha^{q-2}+b_{r}\alpha^{q-1},\label{eq-basisFq}
\end{align}
where $b_1,\dots,b_r\in\F_p$.
Based on the expression~(\ref{eq-basisFq}), define the linear maps
\begin{align}
 \phi&:\F_q\rightarrow\F_p^{q-1}:\quad \beta\mapsto 
\left(0,\ldots,0,b_1,\dots,b_r\right),\label{eq-mapphi1}
\end{align}
and 
\begin{align}
 \phi^{(s)}&:\F_q^s\rightarrow\F_p^{(q-1)s}:\quad 
\left(\beta_1,\dots,\beta_s\right)\mapsto\left(\phi(\beta_1),\dots,\phi(\beta_s)\right).
\label{eq-mapphi2}
\end{align}
Note that $\phi\left(\alpha^u\right)=\mathbf{e}_u$ for each $q-r-1\leq u\leq q-1$.

\begin{lemma}\label{lem-blacktrianglephi}
 Let $\beta\in\F_q$.
If $\phi(\beta)$ is considered as a vector with integer entries, then
\begin{align*}
 \blacktriangledown\left(0,\dots,0,\phi(\beta),0,\dots,0\right)=(0,\dots,0,\beta,0,\dots,0).
\end{align*}
\end{lemma}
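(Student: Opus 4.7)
The plan is to verify the identity by directly unpacking the two definitions and checking that $\blacktriangledown$ inverts $\phi$ on the nose. There is no clever trick involved; the maps $\phi$ and $\blacktriangledown$ have simply been set up so that this works componentwise.

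First I would fix notation. Write $\phi(\beta) = (0,\dots,0,b_1,\dots,b_r)$ with the nonzero entries sitting at positions $q-r, q-r+1, \dots, q-1$ (so position $q-r+k-1$ carries $b_k$), where the $b_k \in \F_p$ satisfy
\begin{align*}
\beta = b_1\alpha^{q-r} + b_2 \alpha^{q-r+1} + \dots + b_r\alpha^{q-1}
\end{align*}
by (\ref{eq-basisFq}). Suppose the block $\phi(\beta)$ occurs in the $s$-th slot of the input to $\blacktriangledown$, so that all other slots are the zero vector of length $q-1$.

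Next I would apply the definition of $\blacktriangledown$ coordinatewise. For $t\neq s$, the $t$-th slot of the input is identically zero, hence the $t$-th coordinate of the output is $\sum_{i=1}^{q-1} 0 \cdot \alpha^i = 0$, as required. For the $s$-th coordinate, the input entries $j_{s,i}$ are zero for $1\le i \le q-r-1$ and equal $b_{i-q+r+1}$ for $q-r\le i\le q-1$, so
\begin{align*}
\sum_{i=1}^{q-1} j_{s,i}\alpha^i
= \sum_{k=1}^{r} b_k\, \alpha^{q-r+k-1}
= \beta.
\end{align*}
This is precisely the defining equation for $\phi(\beta)$, so the $s$-th coordinate of the output is $\beta$ and all other coordinates vanish, giving the claimed equality.

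The one subtlety to address, which is the only place a reader might pause, is the phrase ``considered as a vector with integer entries.'' The $b_k$ lie in $\F_p$ but appear inside the integer-valued $\blacktriangledown$; however, because $\F_q$ has characteristic $p$, the product $b_k\alpha^i \in \F_q$ is independent of the integer representative of $b_k$ chosen in $\{0,1,\dots,p-1\}$. I would state this observation in one sentence so that the identity $\sum_k b_k \alpha^{q-r+k-1} = \beta$ can be invoked unambiguously in $\F_q$. After that, the argument is essentially just matching indices, so no step should present a genuine obstacle.
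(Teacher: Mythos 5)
Your proof is correct and follows essentially the same route as the paper's: a direct unpacking of the definitions of $\phi$ and $\blacktriangledown$, matching the basis expansion $\beta=\sum_{k=1}^r b_k\alpha^{q-r+k-1}$ coordinate by coordinate. The extra sentence on the choice of integer representatives of the $b_k$ modulo $p$ is a small, harmless elaboration the paper leaves implicit.
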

\begin{proof}
 Let $\beta=b_1\alpha^{q-r}+\dots+b_{r-1}\alpha^{q-2}+b_{r}\alpha^{q-1}$. 
Then $\phi(\beta)=(\mathbf{0},b_1,\dots,b_r)$ is mapped under $\blacktriangledown$ to
$0\cdot\alpha+\ldots+0\cdot\alpha^{q-r-1}+b_1\alpha^{q-r}+\ldots+b_r\alpha^{q-1}=\beta$. 
\end{proof}

\begin{theorem}\label{thm-GBcodeideal}
Let $\Co$ be an $[n,k]$ code over $\F_{q}$, where $q=p^r$, $p$ is prime and $r\geq 1$ is an integer.
Let $\alpha$ be a primitive element of\/ $\F_q$ and let $\mathbf{g}_1,\ldots,\mathbf{g}_k$
denote the rows of a standard generator matrix for $\Co$.
Let $\mathbf{m}_i^{(j)}$ be the projection of the vector $(\mathbf{e}_i-\mathbf{g}_i)\alpha^j$ onto the last
$n-k$ coordinates, $1\leq i\leq k$ and $1\leq j\leq q-1$. 
The reduced Gröbner basis for the generalized code ideal $I_+(\Co)$ w.r.t.\ the lex ordering 
$x_{11}\succ x_{12}\succ \ldots \succ x_{n,q-1}$
is given by
\begin{align}
\mathcal{G}
&=\left\{x_{ij}-\left(\bx_{k+1},\dots,\bx_{n}\right)^{\phi^{(n-k)}\left(\mathbf{m}_i^{(j)}\right)}
\mid  1\leq i\leq k,\,1\leq j\leq q\right\}\label{eq-GBgen1}\\
&\quad \cup \left\{x_{ij}-\bx_i^{\phi(\alpha^j)}\mid k+1\leq i\leq n,\,1\leq j\leq q-r-1\right\}\label{eq-GBgen2}\\
&\quad \cup \left\{x_{ij}^p-1\mid k+1\leq i\leq n,\,q-r\leq j\leq q-1\right\}.\label{eq-GBgen3}
\end{align}
\end{theorem}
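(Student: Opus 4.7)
The plan is to mirror the argument of Thm.~\ref{thm-lexGB}, now incorporating Lem.~\ref{lem-blacktrianglephi} and the linear map $\phi$ to handle the general prime-power case $q=p^r$.

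First I would verify that $\mathcal{G}$ has the shape of a reduced Gröbner basis, contingent on $\langle\mathcal{G}\rangle=I_+(\Co)$. The leading terms in (\ref{eq-GBgen1}), (\ref{eq-GBgen2}), and (\ref{eq-GBgen3}) are $x_{ij}$ for $i\leq k$, $x_{ij}$ for $i>k$ and $j\leq q-r-1$, and $x_{ij}^p$ for $i>k$ and $j\geq q-r$, respectively; these index sets are disjoint, giving pairwise coprime leading monomials. Since every $\phi(\beta)$ has support in $\{q-r,\ldots,q-1\}$ with entries strictly less than $p$, the tails of the binomials in (\ref{eq-GBgen1}) and (\ref{eq-GBgen2}) involve only variables $x_{s,t}$ with $s>k$ and $t\geq q-r$, each appearing to a power strictly below $p$. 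Hence no leading term divides any tail term, so $\mathcal{G}$ will automatically be \emph{the} reduced Gröbner basis once the equality of ideals is established.

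For $\mathcal{G}\subseteq I_+(\Co)$, Lem.~\ref{lem-reducemon} reduces matters to checking that $\blacktriangledown$ sends the exponent difference of each binomial to a codeword. Applying Lem.~\ref{lem-blacktrianglephi} block-by-block, the tail of a binomial from (\ref{eq-GBgen1}) maps to the vector in $\F_q^n$ whose last $n-k$ coordinates are $\mathbf{m}_i^{(j)}$, while the head maps to $\alpha^j\mathbf{e}_i$, so their difference is $\alpha^j\mathbf{g}_i\in\Co$ by definition of $\mathbf{m}_i^{(j)}$. For (\ref{eq-GBgen2}) both head and tail map to $\alpha^j\mathbf{e}_i$, yielding the zero codeword. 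For (\ref{eq-GBgen3}) the difference maps to $p\alpha^j\mathbf{e}_i=\mathbf{0}$ in $\F_q^n$ since $\ch{\F_q}=p$.

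The reverse inclusion $I_+(\Co)\subseteq\langle\mathcal{G}\rangle$ reduces, by Thm.~\ref{thm-martShapeIc}, to the generators of $I_G$ and of $I_q$. For a generator $\bx^{\blacktriangle(\alpha^j\mathbf{g}_i)}-1$ of $I_G$, I would write $\alpha^j\mathbf{g}_i=\alpha^j\mathbf{e}_i-\widetilde{\mathbf{m}}$, where $\widetilde{\mathbf{m}}\in\F_q^n$ embeds $\mathbf{m}_i^{(j)}$ into the last $n-k$ coordinates, apply Lem.~\ref{lem-reducemonomial} to decompose $\bx^{\blacktriangle(\alpha^j\mathbf{g}_i)}\equiv x_{ij}\,\bx^{\blacktriangle(-\widetilde{\mathbf{m}})}\mod I_q$, substitute $x_{ij}$ using (\ref{eq-GBgen1}), and then use (\ref{eq-GBgen2}) to bring each remaining variable into the high index range $\{q-r,\ldots,q-1\}$ and (\ref{eq-GBgen3}) to reduce exponents modulo $p$. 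The $\F_p$-linearity of $\phi$ together with the componentwise identity $\alpha^{m_s}+(-\alpha^{m_s})=0$ collapses the resulting monomial to $1$. For the generators of $I_q$, the cases $i\leq k$ and $i>k$ are treated separately as in Thm.~\ref{thm-lexGB}: the first by a telescoping combination of members of (\ref{eq-GBgen1}), the second by enumerating subcases according to whether each of $u,v,w$ lies in the low range $\{1,\ldots,q-r-1\}$ (where (\ref{eq-GBgen2}) provides a direct substitution) or in the high range $\{q-r,\ldots,q-1\}$ (where no such substitution exists and one relies on (\ref{eq-GBgen3}) alone). In every subcase the tactic is the same: rewrite each $x_{i\cdot}$ via (\ref{eq-GBgen2}) as $\bx_i^{\phi(\alpha^\cdot)}$, combine exponents by the $\F_p$-linearity of $\phi$, and exploit $\alpha^u+\alpha^v=\alpha^w$ (or $=0$) together with (\ref{eq-GBgen3}). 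The proliferation of these low/high subcases, which extend the handful appearing in the prime-field proof, is the principal obstacle, although each individual subcase is routine.
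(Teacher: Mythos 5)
Your proposal is correct and follows essentially the same route as the paper's proof: verify the reduced-Gröbner-basis shape from coprime leading terms and tails supported on the high-index variables, establish $\mathcal{G}\subseteq I_+(\Co)$ via Lemma~\ref{lem-blacktrianglephi}, Lemma~\ref{lem-reducemon} and the identity $\alpha^j\mathbf{e}_i-(\mathbf{0},\mathbf{m}_i^{(j)})=\alpha^j\mathbf{g}_i$, and obtain the reverse inclusion by reducing the generators of $I_G$ and $I_q$ from Theorem~\ref{thm-martShapeIc} modulo the subideals generated by (\ref{eq-GBgen2}) and (\ref{eq-GBgen3}), with the same case split on $i\leq k$ versus $i>k$ and on whether $u,v,w$ lie in the low range $\{1,\dots,q-r-1\}$ or the high range $\{q-r,\dots,q-1\}$. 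The only cosmetic deviation is your appeal to Lemma~\ref{lem-reducemonomial} where the paper uses the exact monomial identity $\bx^{\blacktriangle(\mathbf{0},-\mathbf{m}_i^{(j)})}x_{ij}=\bx^{\blacktriangle(\alpha^j\mathbf{g}_i)}$ available because the supports lie in disjoint blocks.
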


\begin{proof}
The leading terms of all binomials in $\mathcal{G}$ are relatively prime and all the second terms do not involve any of the leading terms.
Hence, the set $\mathcal{G}$ is the reduced Gröbner basis for the ideal it generates.

It remains to prove that the binomials in $\mathcal{G}$ belong to the ideal $I_+(\Co)$ and that they form a generating set. 
First, claim that $\mathcal{G}\subset I_+(\Co)$.
Indeed, consider the following cases:
\begin{itemize}

 \item Take a binomial 
$x_{ij}-\left(\bx_{k+1},\dots,\bx_{n}\right)^{\phi^{(n-k)}\left(\mathbf{m}_i^{(j)}\right)}$ 
from the subset~(\ref{eq-GBgen1}). 
By Lemma~\ref{lem-blacktrianglephi}, applying the operator $\blacktriangledown$ to the exponent of the first and the second term
yields $\alpha^j\mathbf{e}_i$ and $\left(\mathbf{0},\mathbf{m}_i^{(j)}\right)$, respectively. 
But $\alpha^j\mathbf{e}_i-\left(\mathbf{0},\mathbf{m}_i^{(j)}\right)=\alpha^j\mathbf{g}_i\in\Co$
and so by Lemma~\ref{lem-reducemon},
the considered binomial belongs to $I_+(\Co)$.

 \item Pick a binomial $x_{ij}-\bx_i^{\phi(\alpha^j)}$ form the subset~(\ref{eq-GBgen2}). 
By Lemma~\ref{lem-blacktrianglephi}, applying the operator $\blacktriangledown$ to the first and the second term gives 
$\alpha^j\mathbf{e}_i$ and $\alpha^j\mathbf{e}_i$, respectively. 
Since $\mathbf{0}\in\Co$, this binomial lies in $I_+(\Co)$ by Lemma~\ref{lem-reducemon}.

\item Finally, consider a binomial $x_{ij}^p-1$ from the subset~(\ref{eq-GBgen3}).
By the same argument as in the previous case, the binomial $x_{ij}^p-1$ corresponds to the zero codeword and thus also belongs to $I_+(\Co)$.
\end{itemize}

Second, claim that $\mathcal{G}$ generates the ideal $I_+(\Co)$. 
Indeed, we show that the binomials in~(\ref{eq-codeidealGen}) are generated by $\mathcal{G}$.
For this, put 
$$J=\left\langle x_{ij}-\bx_i^{\phi(\alpha^j)}\mid k+1\leq i\leq n,\,1\leq j\leq q-r-1\right\rangle$$ and 
$$K=\left\langle x_{ij}^p-1\mid k+1\leq i\leq n,\,q-r\leq j\leq q-1\right\rangle.$$
Consider the following cases:
\begin{itemize}

 \item First we prove 
that the binomials in~$I_G$ are generated by the binomials in $\mathcal{G}$.  
For this, take a binomial $\bx^{\blacktriangle (\alpha^j\mathbf{g}_i)}-1$ 
for some $1\leq i\leq k$ and $1\leq j\leq p-1$.
Claim that
\begin{align*}
&\bx^{\blacktriangle(\alpha^j\mathbf{g}_i)}-1
\equiv\\
&\quad\quad \bx^{\blacktriangle\left(\mathbf{0},\,-\mathbf{m}_i^{(j)}\right)}
\left(x_{ij}-\left(\bx_{k+1},\dots,\bx_{n}\right)^{\phi^{(n-k)}\left(\mathbf{m}_i^{(j)}\right)}\right)        
\mod \left(J+K\right).
\end{align*}
Indeed, we have
$$\bx^{\blacktriangle\left(\mathbf{0},\,-\mathbf{m}_i^{(j)}\right)}=\left(\bx_{k+1},\dots,\bx_{n}\right)
^{\blacktriangle\left(-\mathbf{m}_i^{(j)}\right)}$$
and 
\begin{align*}
 \bx^{\blacktriangle\left(\mathbf{0},\,-\mathbf{m}_i^{(j)}\right)}x_{ij}
=\bx^{\blacktriangle\left(\mathbf{0},\,-\mathbf{m}_i^{(j)}\right)}\bx^{\blacktriangle\left(\alpha^j\mathbf{e}_i\right)}
= \bx^{\blacktriangle\left(\alpha^j\mathbf{g}_i\right)}.
\end{align*}
Furthermore,
\begin{eqnarray*}
\lefteqn{ \bx^{\blacktriangle\left(\mathbf{0},\,-\mathbf{m}_i^{(j)}\right)}
\left(\bx_{k+1},\dots,\bx_{n}\right)^{\phi^{(n-k)}\left(\mathbf{m}_i^{(j)}\right)}}\\
&& =\left(\bx_{k+1},\dots,\bx_{n}\right)
^{\blacktriangle\left(-\mathbf{m}_i^{(j)}\right)+\phi^{(n-k)}\left(\mathbf{m}_i^{(j)}\right)}.
\end{eqnarray*}
In order to show that this monomial reduces to $1$ modulo~$(K+J)$,
write $-\mathbf{m}_i^{(j)}=\left(\alpha^{j_1},\dots,\alpha^{j_{n-k}}\right)$.
Then we have
\begin{align*}
 \blacktriangle\left(-\mathbf{m}_i^{(j)}\right)&=\left(\mathbf{e}_{j_1},\dots,\mathbf{e}_{j_{n-k}}\right).
\end{align*}
We may assume that all entries in $\mathbf{m}_i^{(j)}$ are non-zero.
Moreover,
\begin{align*}
\phi^{(n-k)}\left(\mathbf{m}_i^{(j)}\right)
&=\left(\mathbf{0},b_{11},\dots,b_{1r},\dots,\mathbf{0},b_{n-k,1},\dots,b_{n-k,r}\right),
\end{align*}
where $\sum_{s=1}^{r}b_{\ell s}\alpha^{q-r-1+s}=-\alpha^{j_\ell}$ for $1\leq \ell\leq n-k$.
 
If $j_\ell\geq q-r$, then $b_{\ell s}=0$ for $s\neq j_\ell$ and $b_{\ell j_\ell}=p-1$. So 
the corresponding entry in the 
exponent vector $\blacktriangle\left(-\mathbf{m}_i^{(j)}\right)+\phi^{(n-k)}\left(\mathbf{m}_i^{(j)}\right)$
is $1+(p-1)=p$ giving rise to the monomial $x_{k+\ell, j_\ell}^p$. 
However, $x_{k+\ell, j_\ell}^p$ reduces to $1$ by the appropriate binomial in~$K$.

If $j_\ell\leq q-r-1$, then the monomial $x_{k+\ell,j_\ell}$ in $\left(\bx_{k+1},\dots,\bx_{n}\right)
^{\blacktriangle\left(-\mathbf{m}_i^{(j)}\right)}$ reduces to $\bx_\ell^{\phi\left(\alpha^{j_\ell}\right)}$
by the appropriate binomial in~$J$.
But $\phi\left(\alpha^{j_\ell}\right)=\left(\mathbf{0},p-b_{\ell 1},\dots,p-b_{\ell r}\right)$,
since $\phi$ is linear and
$\phi\left(-\alpha^{j_\ell}\right)=\left(\mathbf{0},b_{\ell_1},\dots,b_{\ell_r}\right)$.
This gives the monomials $x_{k+\ell, i}^{p}$ for $k+1\leq i\leq n$ which as above also reduce to $1$.

\item 
Second we prove that the binomials in $I_q$ whose second term is different from $1$
are generated by the binomials in $\mathcal{G}$.
To this end, let $\alpha^u+\alpha^v=\alpha^w$ with $\alpha^u,\alpha^v,\alpha^w\neq 0$ 
and consider the corresponding binomial $x_{iu}x_{iv}-x_{iw}$ for some $1\leq i\leq n$.
Claim that 
\begin{eqnarray*}
\lefteqn{x_{iv}\left(x_{iu}-\left(\bx_{k+1},\dots,\bx_{n}\right)^{\phi^{(n-k)}\left(\mathbf{m}_i^{(u)}\right)}\right)} \\
&&+(\bx_{k+1},\dots,\bx_{n})^{\phi^{(n-k)}\left(\mathbf{m}_i^{(u)}\right)}
\left(x_{iv}-\left(\bx_{k+1},\dots,\bx_{n}\right)^{\phi^{(n-k)}\left(\mathbf{m}_i^{(v)}\right)}\right)\\
&&-\left(x_{iw}-\left(\bx_{k+1},\dots,\bx_{n}\right)^{\phi^{(n-k)}\left(\mathbf{m}_i^{(w)}\right)}\right)\\
&&\equiv x_{iu}x_{iv}-x_{iw}\mod K.
\end{eqnarray*}

Indeed, 
$$\left(\bx_{k+1},\dots,\bx_{n}\right)^{\phi^{(n-k)}\left(\mathbf{m}_i^{(w)}\right)}
-\left(\bx_{k+1},\dots,\bx_{n}\right)^{\phi^{(n-k)}\left(\mathbf{m}_i^{(u)}\right)
+\phi^{(n-k)}\left(\mathbf{m}_i^{(v)}\right)}$$
reduces to zero, because 
$$\phi^{(n-k)}\left(\mathbf{m}_i^{(u)}\right)+\phi^{(n-k)}\left(\mathbf{m}_i^{(v)}\right)
=\phi^{(n-k)}\left(\mathbf{m}_i^{(w)}\right).$$

If $ k+1\leq i\leq n$, then we distinguish the following cases:
\begin{itemize}
 \item If $u,v,w\leq q-r-1$, then 
\begin{align*}
&x_{iv}\left(x_{iu}-\bx_i^{\phi(\alpha^u)}\right)
\!+\bx_i^{\phi(\alpha^u)}\left(x_{iv}-\bx_i^{\phi(\alpha^v)}\right)
\!-\!\left(x_{iw}-\bx_i^{\phi(\alpha^w)}\right) \\
&\equiv x_{iu}x_{iv}-x_{iw}\mod K
\end{align*}
since
 $\bx_i^{\phi\left(\alpha^u\right)}\cdot\bx_i^{\phi\left(\alpha^v\right)}
=\bx_i^{\phi\left(\alpha^u\right)+\phi\left(\alpha^v\right)}\equiv\bx_i^{\phi\left(\alpha^u+\alpha^v\right)}
=\bx_i^{\phi\left(\alpha^w\right)}\mod K.$

\item If $u,w\leq q-r-1$ and $q-r\leq v\leq q-1$, then 
\begin{align*}
x_{iv}\left(x_{iu}-\bx_i^{\phi(\alpha^u)}\right)
\!-\!\left(x_{iw}-\bx_i^{\phi(\alpha^w)}\right)\equiv x_{iu}x_{iv}-x_{iw}\mod K
\end{align*}
because $\phi\left(\alpha^v\right)=\mathbf{e}_v$ and so
\begin{eqnarray*}
 \lefteqn{\bx_i^{\phi(\alpha^w)}-x_{iv}\bx_i^{\phi(\alpha^u)}}\\
&&=\bx_i^{\phi(\alpha^w)}-\bx_i^{\phi(\alpha^v)}\bx_i^{\phi(\alpha^u)}
\equiv\bx_i^{\phi(\alpha^w)}-\bx_i^{\phi(\alpha^{v}+\alpha^u)}=0\mod K.
\end{eqnarray*}
The case $v,w\leq q-r-1$ and $q-r\leq u\leq q-1$ is analogous.

\item If $q-r\leq u,v\leq q-1$, then $w\leq q-r-1$ as the set 
$$\left\{\alpha^i\mid q-r-1\leq i\leq q-1\right\}$$ 
is a basis for $\F_q$.
But then $\phi\left(\alpha^w\right)=\mathbf{e}_u+\mathbf{e}_v$ and so
\begin{align*}
 x_{iw}-\bx_i^{\phi\left(\alpha^w\right)}=x_{iw}-x_{iu}x_{iv}.
\end{align*}

\item If $q-r\leq u,w\leq q-1$, then the same reasoning as in the preceding case shows that $v\leq q-r-1$.
But then $\alpha^v=\alpha^w+(p-1)\alpha^u$ and 
so $\phi\left(\alpha^v\right)=\mathbf{e}_w+(p-1)\mathbf{e}_u$.
Thus
\begin{align*}
 x_{iu}\left(x_{iv}-\bx_i^{\phi\left(\alpha^v\right)}\right)= 
x_{iu}\left(x_{iv}-x_{iw}x_{iv}^{p-1}\right)\equiv x_{iu}x_{iv}-x_{iw}\mod K.
\end{align*}
The case $q-r\leq v,w\leq q-1$ is similar and the case $q-r\leq u,v,w\leq q-1$ cannot occur.
\end{itemize}

\item Third we prove that the binomials in $I_q$ whose second term is equal to $1$ are generated by the
binomials in $\mathcal{G}$.
For this, let $\alpha^u+\alpha^v=0$ with $\alpha^u,\alpha^v\neq 0$ and take the binomial $x_{iu}x_{iv}-1$ for some
$1\leq i\leq n$.

If $1\leq i\leq k$, then
\begin{eqnarray*}
\lefteqn{ x_{iv}\left(x_{iu}-\left(\bx_{k+1},\dots,\bx_{n}\right)^{\phi^{(n-k)}\left(\mathbf{m}_i^{(u)}\right)}\right)}\\
&&+\left(\bx_{k+1},\dots,\bx_{n}\right)^{\phi^{(n-k)}\left(\mathbf{m}_i^{(u)}\right)}
\left(x_{iv}-\left(\bx_{k+1},\dots,\bx_{n}\right)^{\phi^{(n-k)}\left(\mathbf{m}_i^{(v)}\right)}\right)\\
&&\equiv x_{iu}x_{iv}-1\mod K
\end{eqnarray*}
since
\begin{eqnarray*}
\lefteqn{ \left(\bx_{k+1},\dots,\bx_{n}\right)
^{\phi^{(n-k)}\left(\mathbf{m}_i^{(u)}\right)+\phi^{(n-k)}\left(\mathbf{m}_i^{(u)}\right)}}\\
&&\equiv \left(\bx_{k+1},\dots,\bx_{n}\right)^{\phi^{(n-k)}\left(\mathbf{m}_i^{(u)}+\mathbf{m}_i^{(v)}\right)}\\
&&=\left(\bx_{k+1},\dots,\bx_{n}\right)^{\phi^{(n-k)}\left(\mathbf{0}\right)}=1\mod K.
\end{eqnarray*}

If $k+1\leq i\leq n$, then we distinguish the following cases:
\begin{itemize}
 \item If $u,v\leq q-r-1$, then
\begin{align*}
x_{iv}\left(x_{iu}-\bx_i^{\phi\left(\alpha^u\right)}\right)
+\bx_i^{\phi\left(\alpha^u\right)}\left(x_{iv}-\bx_i^{\phi\left(\alpha^v\right)}\right)
\equiv x_{iu}x_{iv}-1\mod K.
\end{align*}
\item If $u\leq q-r-1$ and $q-r\leq v\leq q-1$, 
then $\phi\left(\alpha^u\right)=(p-1)\mathbf{e}_v$
and so
\begin{align*}
x_{iv}\left(x_{iu}-\bx_i^{\phi\left(\alpha^u\right)}\right)=x_{iv}\left(x_{iu}-x_{iv}^{p-1}\right)
\equiv x_{iu}x_{iv}-1\mod K.
\end{align*}
The case $v\leq q-r-1$ and $q-r\leq u\leq q-1$ is analogous and the case $q-r\leq u,v\leq q-1$ cannot arise 
since $\alpha^u+\alpha^v=0$ leads to a contradiction.
\end{itemize}
\end{itemize}
\end{proof}

\begin{corollary}\label{cor-vdim}
 Let $\Co$ be an $[n,k]$ code over a finite field $\F_q$, where $q=p^r$.  
The generalized code ideal $I_+(\Co)$ is a zero-dimensional ideal and the dimension of the coordinate
ring $\KK[\bx]/I_+(\Co)$ as a $\KK$-vector space is $p^{r(n-k)}$.
\end{corollary}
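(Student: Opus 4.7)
The plan is to extract everything from the structure of the reduced Gröbner basis produced in Theorem~\ref{thm-GBcodeideal}. Recall from the preliminaries that the standard monomials (those not in $\lt_\succ(I_+(\Co))$) form a $\KK$-basis of the coordinate ring $\KK[\bx]/I_+(\Co)$. Therefore the proof reduces to inspecting the leading terms of the basis $\mathcal{G}$ and counting the monomials that escape divisibility by any of them.

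First I would read off the leading terms with respect to the lex order $x_{11}\succ\dots\succ x_{n,q-1}$. From~(\ref{eq-GBgen1}) the leading term of each element is the variable $x_{ij}$ for $1\le i\le k$, $1\le j\le q-1$; from~(\ref{eq-GBgen2}) it is $x_{ij}$ for $k+1\le i\le n$ and $1\le j\le q-r-1$; and from~(\ref{eq-GBgen3}) it is $x_{ij}^p$ for $k+1\le i\le n$ and $q-r\le j\le q-1$. Consequently a monomial $\bx^\gamma$ is standard if and only if $\gamma_{ij}=0$ for every pair $(i,j)$ appearing in~(\ref{eq-GBgen1}) or~(\ref{eq-GBgen2}), and $0\le \gamma_{ij}\le p-1$ for the $r(n-k)$ pairs $(i,j)$ with $k+1\le i\le n$ and $q-r\le j\le q-1$.

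Next I would simply count these standard monomials. Each of the $r(n-k)$ free exponents ranges independently over $\{0,1,\dots,p-1\}$, giving $p^{r(n-k)}$ monomials in total. In particular the set of standard monomials is finite, so $\KK[\bx]/I_+(\Co)$ is a finite-dimensional $\KK$-vector space, which is equivalent to $I_+(\Co)$ being zero-dimensional. Its dimension as a $\KK$-vector space equals the number of standard monomials, namely $p^{r(n-k)}$.

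I do not anticipate a real obstacle: the work is entirely a bookkeeping step after Theorem~\ref{thm-GBcodeideal}. The only thing worth being careful about is to verify that the three families of leading terms listed above are pairwise distinct (they are, since each variable $x_{ij}$ occurs as a leading term in at most one of the three subsets), so that the count of standard monomials is not over- or under-stated.
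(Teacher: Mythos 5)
Your proof is correct and follows essentially the same route as the paper: both arguments read off the leading terms of the reduced Gröbner basis from Theorem~\ref{thm-GBcodeideal} and count standard monomials (the paper phrases the count as the product of the degrees of the leading terms, which for one pure-power leading term per variable is exactly your enumeration giving $p^{r(n-k)}$). Your explicit description of the standard monomials and the remark on disjointness of the three families of leading terms is a slightly more detailed bookkeeping of the same idea.
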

\begin{proof}
By~\cite[p.39]{cls-app}, the algebra $\KK[\bx]/I_+(\Co)$ is finite-dimensional, 
since in the Gröbner basis $\mathcal{G}$ for $I_+(\Co)$ for each variable $x_{ij}$ there is a number
$m_{ij}\geq 0$ such that $x_{ij}^{m_{ij}}=\lt(g)$ for some $g\in\mathcal{G}$.
Moreover, its dimension is given by product of the degrees of the leading terms in the 
Gröbner basis $\mathcal{G}$ for $I_+(\Co)$.
\end{proof}

\begin{example}
Consider the finite field  $\F_9=\F_3[x]/(x^2+x+2)$ and take as primitive element $\alpha$ 
the root of the primitive polynomial $x^2+x+2$. Then the elements of $\F_9$ can be described as 
\begin{align*}
 &0, \alpha, \alpha^2=2\alpha\!+\!1, \alpha^3=2\alpha\!+\!2, \alpha^4=2, \alpha^5=2\alpha, 
\alpha^6=\alpha\!+\!2, \alpha^7=\alpha\!+\!1, \alpha^8=1.
\end{align*} 
 Consider the $[3,2]$ code $\Co$ over $\F_9$ with generator matrix  
\begin{align*}
 G=\begin{pmatrix}1&0&\alpha^2\\0&1&\alpha^5
   \end{pmatrix}=\begin{pmatrix}1&0&-\alpha^6\\0&1&-\alpha
   \end{pmatrix}.
\end{align*}
Table~\ref{tab-mapphi} shows the images of all non-zero elements of $\F_9$ and 
of all $\mathbf{m}_i^{(j)}$ under the map $\phi$. 
This table can be used to construct the reduced Gröbner basis $\mathcal{G}$ 
for $I_+(\Co)$ w.r.t.\ lex ordering according to Thm.~\ref{thm-GBcodeideal}:
\begin{align*}
\mathcal{G}(I_+(\Co))=&\{ x_{11}-x_{37},\,x_{12}-x_{38},\,x_{13}-x_{37}x_{38}^2,\,x_{14}-x_{37}^2x_{38}^2,\\
&\quad\quad x_{15}-x_{37}^2,\,x_{16}-x_{38}^2,\,x_{17}-x_{37}^2x_{38},\,x_{18}-x_{37}x_{38},\,\}\\
&\cup\{x_{21}-x_{37}^2x_{38}^2,\,x_{22}-x_{37}^2,\,x_{23}-x_{38}^2,\,x_{24}-x_{37}^2x_{38},\\
&\quad\quad x_{25}-x_{37}x_{38},\,x_{26}-x_{37},\,x_{27}-x_{38},\,x_{28}-x_{37}x_{38}^2,\,\}\\
&\cup\{x_{31}-x_{37}x_{38}^2,\,x_{32}-x_{37}^2x_{38}^2,\,x_{33}-x_{37}^2,\,x_{34}-x_{38}^2,\\
&\quad\quad x_{35}-x_{37}^2x_{38},\,x_{36}-x_{37}x_{38}\}\\
&\cup\{x_{37}^3-1,\,x_{38}^3-1\}.
\end{align*}
\end{example}
\begin{table}[htb]
 \begin{tabular}{c|c|c | c | c | c}
  $\beta\in\F_9$&$\phi(\beta)\in\F_3^8$&$\mathbf{m}_1^{(j)}$&$\phi\left(\mathbf{m}_1^{(j)}\right)$
&$\mathbf{m}_2^{(j)}$&$\phi\left(\mathbf{m}_2^{(j)}\right)$\\\hline
	  $\alpha$&$(\mathbf{0},1,2)$&$\mathbf{m}_1^{(1)}=\alpha^7$&$(\mathbf{0},1,0)$&$\mathbf{m}_2^{(1)}=\alpha^2$&$(\mathbf{0},2,2)$\\
	  $\alpha^2$&$(\mathbf{0},2,2)$&$\mathbf{m}_1^{(2)}=\alpha^8$&$(\mathbf{0},0,1)$&$\mathbf{m}_2^{(2)}=\alpha^3$&$(\mathbf{0},2,0)$\\
	  $\alpha^3$&$(\mathbf{0},2,0)$&$\mathbf{m}_1^{(3)}=\alpha$&$(\mathbf{0},1,2)$& $\mathbf{m}_2^{(3)}=\alpha^4$&$(\mathbf{0},0,2)$\\
	  $\alpha^4$&$(\mathbf{0},0,2)$&$\mathbf{m}_1^{(4)}=\alpha^2$&$(\mathbf{0},2,2)$&$\mathbf{m}_2^{(4)}=\alpha^5$&$(\mathbf{0},2,1)$\\
	  $\alpha^5$&$(\mathbf{0},2,1)$&$\mathbf{m}_1^{(5)}=\alpha^3$&$(\mathbf{0},2,0)$&$\mathbf{m}_2^{(5)}=\alpha^6$&$(\mathbf{0},1,1)$\\
	  $\alpha^6$&$(\mathbf{0},1,1)$&$\mathbf{m}_1^{(6)}=\alpha^4$&$(\mathbf{0},0,2)$&$\mathbf{m}_2^{(6)}=\alpha^7$&$(\mathbf{0},1,0)$\\
	  $\alpha^7$&$(\mathbf{0},1,0)$&$\mathbf{m}_1^{(7)}=\alpha^5$&$(\mathbf{0},2,1)$&$\mathbf{m}_2^{(7)}=\alpha^8$&$(\mathbf{0},0,1)$\\
	  $\alpha^8$&$(\mathbf{0},0,1)$&$\mathbf{m}_1^{(8)}=\alpha^6$&$(\mathbf{0},1,1)$&$\mathbf{m}_2^{(8)}=\alpha$&$(\mathbf{0},1,2)$\\
\end{tabular}
\caption{Evaluation of the map $\phi$}
\label{tab-mapphi}
\end{table}

Besides the already mentioned benefit that the generating set given by Thm.~\ref{thm-GBcodeideal} consists
of less binomials than the generating set provided by Thm.~\ref{thm-martShapeIc}, there is the computational
advantage that the generating set in Thm.~\ref{thm-GBcodeideal} is already a reduced Gröbner basis. 
Once the reduced Gröbner basis for $I_+(\Co)$ w.r.t.\ the lex ordering has been constructed,
the FGLM algorithm can efficiently compute the reduced Gröbner basis w.r.t.\ any other monomial order~\cite{fglm}.
The complexity of the FGLM algorithm is $O(mD^3)$, where $m$ denotes the number of variables and $D$ the dimension of
$\KK$-algebra $\KK[\bx]/I$, provided that there is no growth of the coefficients. 
But unfortunately, this does not yield an improvement over the ordinary Gröbner basis computation by Buchberger's method
due to the large dimension of the algebra $\KK[\bx]/I_+(\Co)$. 
Since the ideal $I_+(\Co)$ is zero-dimensional and there is no growth of the coefficients 
(this can also be enforced by taking $\KK=\F_2$) 
the runtime amounts to $O((n(q-1)p^{3r(n-k)})$.
Unfortunately, no considerable improvement is achieved by this approach when compared with the FGML method 
proposed in~\cite{martcodeideal, marquez_corbella} whose complexity is
$O(n^2(q-1)p^{r(n-k)})$~(\cite[Thm.~4.42]{marquez_corbella}).


\section{Heuristic Decoding of Linear Codes}

It has been shown that in the binary case the code ideal $I(\Co)$ can be exploited to establish a 
general method for complete decoding~\cite{borges}. 
The key ingredients are the reduced Gröbner basis with respect to a degree compatible ordering and the division algorithm. 
The proposed algorithm is based on the fact that for any vector $w\in\F_2^n$, its Hamming weight 
coincides with the degree of the monomial $\bx^w$, i.e., $\wt(w)=\deg(\bx^w)$. 
However, in the non-binary case this is generally not true. 

The idea behind the introduction of the generalized code ideal in~\cite{martcodeideal}
is that it overcomes this deficiency. Indeed, the generalized code ideal solves the complete decoding
problem and provides a method that for any received word returns the closest codeword w.r.t.\ the Hamming 
distance~\cite[Chapter 4]{marquez_corbella}. 
The crucial result is the following.

\begin{theorem}{\cite[Thm.~4.19]{marquez_corbella}}\label{thm-completedecode}
Let $\Co$ be a linear code over a finite field with error-correcting capability $t$ and let $\mathcal{G}$
be the reduced Gröbner basis for $I_+(\Co)$ w.r.t.\ a degree compatible ordering. 
For each word $w\in\F_q^n$, the remainder of $\bx^w$ on division by $\mathcal{G}$ yields a monomial $\bx^e$
such that $e$ is a closest codeword to~$w$ w.r.t.\ the Hamming distance. 
If additionally, $\supp(e)\leq t$, then $w-e$ is the unique closest codeword to $w$.
\end{theorem}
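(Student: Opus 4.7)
The plan is to exploit the correspondence between cosets of $\F_q^n/\Co$ and residue classes of monomials in $\KK[\bx]/I_+(\Co)$, using the fact that the crossing map converts Hamming weight into monomial degree. The first ingredient I would establish is the congruence $\bx^{\blacktriangle a}\equiv\bx^{\blacktriangle b}\pmod{I_+(\Co)}$ if and only if $a-b\in\Co$. The ``if'' direction is immediate from Lemma~\ref{lem-reducemon}. For the ``only if'' direction, Lemma~\ref{lem-reducemonomial} shows that every monomial of $\KK[\bx]$ is congruent modulo $I_q\subseteq I_+(\Co)$ to some squarefree $\bx^{\blacktriangle a}$ with $a\in\F_q^n$, so the classes $\{[\bx^{\blacktriangle a}]:a\in\F_q^n\}$ span the quotient algebra; by Cor.~\ref{cor-vdim} this algebra has $\KK$-dimension exactly $p^{r(n-k)}=q^{n-k}=|\F_q^n/\Co|$, forcing the $q^{n-k}$ coset representatives to give linearly independent residue classes.

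Next I would record the weight-degree identity $\deg(\bx^{\blacktriangle a})=\wt(a)$, which is immediate from the definition of $\blacktriangle$: each nonzero coordinate of $a$ contributes exactly one degree-one variable, while zero coordinates contribute nothing. Given a received word $w$, the division algorithm against $\mathcal{G}$ returns the unique standard monomial $\bx^e$ in the residue class of $\bx^{\blacktriangle w}$; because $\succ$ is degree compatible, $\bx^e$ has minimum total degree in that class. Combining the weight-degree identity with the correspondence above, $e=\blacktriangle e'$ where $e'$ is a minimum Hamming weight element of the coset $w+\Co$. Hence $w-e'$ is a codeword realizing $\dist(w,\Co)$, i.e., a closest codeword to $w$.

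For the uniqueness claim, suppose $\wt(e')\leq t$ and let $c\in\Co$ with $c\neq w-e'$. Since $(w-e')-c$ is a nonzero codeword, $\wt\bigl((w-e')-c\bigr)\geq d(\Co)\geq 2t+1$, so the triangle inequality yields $\dist(w,c)\geq (2t+1)-\wt(e')\geq t+1>\wt(e')=\dist(w,w-e')$, and $w-e'$ is the unique closest codeword. The main obstacle is the ``only if'' half of the first step: showing rigorously that distinct cosets yield non-congruent standard monomials. The cleanest route is the dimension count above, which reduces the matter to a single invocation of Cor.~\ref{cor-vdim}; attempting a direct ideal-theoretic argument instead would require a much closer inspection of which binomials can lie in $I_+(\Co)$ and is likely to be considerably more delicate.
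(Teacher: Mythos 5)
The paper does not actually prove this statement: it is imported verbatim from M\'arquez-Corbella's thesis (\cite[Thm.~4.19]{marquez_corbella}) and used as a black box, so there is no in-paper proof to compare yours against. Judged on its own, your argument is essentially correct and has the virtue of being assembled entirely from results the paper does establish: Lemma~\ref{lem-reducemon} for the ``if'' direction of the coset--residue-class correspondence, Lemma~\ref{lem-reducemonomial} (more precisely, the reduction $\bx^u\equiv\bx^{\blacktriangle(\blacktriangledown u)}\bmod I_q$ from the proof of Lemma~\ref{lem-reducemon}) to see that the monomials $\bx^{\blacktriangle a}$ span $\KK[\bx]/I_+(\Co)$, and the dimension count of Cor.~\ref{cor-vdim} to force distinct cosets onto linearly independent classes. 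That is a clean way to get the ``only if'' direction without analysing which binomials lie in $I_+(\Co)$. Two points deserve to be made explicit. First, the step ``$e=\blacktriangle e'$ for some $e'\in\F_q^n$'' is asserted rather than proved: a priori the standard monomial $\bx^e$ need not be squarefree with at most one variable per block. It does follow, but only via the inequality $\deg(\bx^e)\ge\wt(\blacktriangledown e)$ (each nonempty block contributes at least $1$ to the degree and at most $1$ to the weight) together with the degree-minimality of the standard monomial in its class, which forces equality blockwise and hence $e=\blacktriangle(\blacktriangledown e)$; you should include this half-line. Second, you implicitly use that the remainder of a monomial on division by $\mathcal{G}$ is again a single monomial and is the $\succ$-least (hence, for a degree compatible order, degree-minimal) element of its congruence class; both facts hold because $I_+(\Co)$ is generated by pure difference binomials, so $\bx^a\equiv\bx^b$ implies $\bx^a-\bx^b\in I_+(\Co)$ and the larger of the two monomials is non-standard. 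With these two remarks filled in, the proof is complete; the final uniqueness argument via the triangle inequality is standard and correct.
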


This result implies that the standard monomials of $I_+(\Co)$ w.r.t.\ a degree compatible ordering 
provide a set of coset leaders.
Unfortunately, the decoding according to the above result can be rather costly due to the large number of variables and the high complexity of 
Buchberger's algorithm. 
Although the degree reverse lexicographic order is usually the best choice from a complexity point of view, 
the computations can still become infeasable. 
Indeed, for a zero-dimensional ideal the computational complexity $O(d^{m^2})$ is exponential in the number $m$ of variables, where 
$d$ is the maximal degree of the defining polynomials~\cite{CanigliaGH88,fglm}. 
And indeed, this is hardly surprising since the
complete decoding problem is known to be NP-hard~\cite{berlekamp}.

In the following, we present a heuristic method for decoding based on the code ideal $I(\Co)$.
To this end, let $\Co$ be an $[n,k,d]$ code over $\F_p$, where $p$ is a prime, 
and let $\succ$ be any degree compatible order on $\KK[x_1,\ldots,x_n]$.
\begin{proposition}\label{prop-transversal}
 The standard monomials of the $\KK$-algebra $\KK[\bx]/I(\Co)$ with respect to any monomial order give rise to
a transversal of the elements in the quotient space $\F_p^n/\Co$.
\end{proposition}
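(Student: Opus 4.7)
The strategy is to establish a bijection between the set of standard monomials of $\KK[\bx]/I(\Co)$ and the quotient $\F_p^n/\Co$ via the natural map $\psi\colon \bx^u \mapsto u+\Co$; this is precisely what the word ``transversal'' means here.

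First I would show that $\psi$ is well defined by observing that every standard monomial has exponent in $\{0,1,\dots,p-1\}^n$. Indeed, $x_i^p-1\in I_p\subseteq I(\Co)$ and $x_i^p\succ 1$ for any monomial order (since $\mathbf{0}$ is the unique minimum), so $x_i^p\in\lt_\succ(I(\Co))$ for every $i$; hence no exponent of a standard monomial reaches $p$, and $\bx^u$ may be identified with a vector $u\in\F_p^n$.

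Next I would prove bijectivity. For surjectivity, pick $v\in\{0,\dots,p-1\}^n$ and apply the division algorithm to $\bx^v$ relative to the reduced Gr\"obner basis $\mathcal{G}_\succ(I(\Co))$. Because $I(\Co)$ is generated by pure binomials, $\mathcal{G}_\succ(I(\Co))$ consists of binomials, and each reduction step turns a monomial into a scalar multiple of another monomial; hence the algorithm returns a standard monomial $\bx^u$ with $\bx^v-\bx^u\in I(\Co)$. The key lemma stated below then yields $v-u\in\Co$, so $\psi(\bx^u)=v+\Co$. For injectivity, suppose standard monomials $\bx^u$ and $\bx^v$ satisfy $u-v\in\Co$. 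Then $\bx^u-\bx^v\in I'(\Co)\subseteq I(\Co)$ by the very definition of $I'(\Co)$, and if $\bx^u-\bx^v\ne 0$ its leading monomial---either $\bx^u$ or $\bx^v$---would lie in $\lt_\succ(I(\Co))$, contradicting standardness; hence $u=v$.

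The main obstacle is the converse implication required in the surjectivity step: if $\bx^v-\bx^u\in I(\Co)$ with $u,v\in\{0,\dots,p-1\}^n$, then $v-u\in\Co$. To settle this I would introduce the $\KK$-algebra homomorphism $\eta\colon\KK[\bx]\to\KK[\F_p^n/\Co]$ into the group algebra of the finite abelian group $\F_p^n/\Co$, defined by $\eta(x_i)=[\mathbf{e}_i+\Co]$. A direct check shows $\eta$ annihilates each generator $x_i^p-1$ and each binomial $\bx^c-\bx^{c'}$ with $c-c'\in\Co$, so $I(\Co)\subseteq\ker\eta$; since $\eta(\bx^w)=[w+\Co]$ for $w\in\{0,\dots,p-1\}^n$, the vanishing $\eta(\bx^v-\bx^u)=0$ forces $v+\Co=u+\Co$. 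Alternatively one may avoid this lemma entirely by using~(\ref{eq-IclexGB}) to see that $\dim_{\KK}\KK[\bx]/I(\Co)=p^{n-k}=|\F_p^n/\Co|$, so that surjectivity of $\psi$ already forces bijectivity.
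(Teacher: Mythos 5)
Your proof is correct, and its primary route differs from the paper's in a meaningful way. The paper proves injectivity of the map $\bx^u\mapsto u+\Co$ exactly as you do (two distinct standard monomials in the same coset would give a binomial of $I(\Co)$ whose leading term is one of them), but then finishes by \emph{counting}: the number of standard monomials equals $\dim_\KK \KK[\bx]/I(\Co)=p^{n-k}=|\F_p^n/\Co|$, read off from the explicit lex Gr\"obner basis~(\ref{eq-IclexGB}), so the injection is a bijection. You instead prove surjectivity directly via the division algorithm, which requires the converse implication ``$\bx^v-\bx^u\in I(\Co)\Rightarrow v-u\in\Co$''; your homomorphism $\eta\colon\KK[\bx]\to\KK[\F_p^n/\Co]$ into the group algebra is a genuinely different key lemma that the paper does not have. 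It is arguably the stronger contribution: it simultaneously proves the ``only if'' half of the paper's opening assertion (which the paper states without proof), yields the dimension count as a byproduct (since $\eta$ induces an isomorphism $\KK[\bx]/I(\Co)\cong\KK[\F_p^n/\Co]$), and avoids any reliance on the explicit Gr\"obner basis formula, whereas the paper's argument is shorter only because that formula is already on hand. Two small points you should tighten: justify that the remainder of $\bx^v$ cannot be zero (immediate from your own $\eta$, since $\eta(\bx^v)=[v+\Co]\neq 0$, or from evaluation at $\bx=\mathbf{1}$), and note explicitly that the reduced Gr\"obner basis of an ideal generated by pure differences of monomials again consists of such differences (a standard fact, since S-polynomials and reductions preserve this form), which is what makes each remainder a single monomial.
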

\begin{proof}
A word $w\in\F_p^n$ is a codeword if and only if the remainder of $\bx^w$ on division by a Gröbner basis
for $I(\Co)$ equals~$1$. 
So if $\bx^w$ is a standard monomial with $w\neq\mathbf{0}$, then $w\notin\Co$. 
Claim that the standard monomials correspond one-to-one to the cosets of $\Co$. 
Indeed, assume that two distinct standard monomials $\bx^u$ and $\bx^v$ correspond to the same coset $u+\Co=v+\Co$. 
Then $u-v\in\Co$ and so $\bx^u-\bx^v\in\Co$. 
But then either of them must be the leading term which is a contradiction to both being standard monomials.
Furthermore, the dimension of the algebra $\KK[\bx]/I_+(\Co)$ equals the dimension of the algebra $\KK[\bx]/\lt(I_+(\Co))$,
which is given by the number of standard monomials.
By Cor.~\ref{cor-vdim}, the number of standard monomials equals the number of cosets of $\Co$.
\end{proof}

Note that this result is also true for the generalized code ideal and 
that part of the Gröbner representation in~\cite{bqbtmcmmCoset} consists of exactly such a transversal of $\F_p^n/\Co$.
Indeed, the complete decoding algorithm proposed in~\cite{borges, marquez_corbella} is based on the fact that 
for a degree compatible ordering the transversal as in Prop.~\ref{prop-transversal} gives 
the set of coset representatives of minimal Hamming weight.
For the code ideal $I(\Co)$, however, this is in general not true. 

\begin{proposition}
Let $w\in\F_p^n$ be a word. 
If the remainder of $\bx^w$ on division by $\mathcal{G}_\succ(I(\Co))$
is a monomial $\bx^e$ such that the Hamming weight of $e$ is less than or equal to the error-correcting capability, 
then $w-e$ is the unique closest codeword to $w$ w.r.t.\ the Hamming metric.
\end{proposition}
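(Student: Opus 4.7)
The plan is to combine Proposition~\ref{prop-transversal} with the classical ``within half the minimum distance'' argument for unique decoding.

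First I would observe that, by the division algorithm, $\bx^w - \bx^e \in I(\Co)$, so $\bx^w$ and $\bx^e$ represent the same element of $\KK[\bx]/I(\Co)$. Since $I(\Co)$ is generated by pure binomials $\bx^u - \bx^{u'}$ with $u - u' \in \Co$, this congruence forces $w - e$ to lie in $\Co$ (this is essentially the content of Proposition~\ref{prop-transversal}: the standard monomial $\bx^e$ is the unique representative of the coset $w + \Co$, so $w \equiv e \pmod{\Co}$). Consequently $c := w - e$ is a codeword, and its Hamming distance to $w$ is $\dist(w, c) = \wt(w - c) = \wt(e) \leq t$.

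Next I would argue uniqueness by the standard triangle-inequality argument. Let $t = \lfloor (d-1)/2 \rfloor$ denote the error-correcting capability, so $2t < d$. Suppose $c' \in \Co$ is any codeword with $\dist(w, c') \leq \dist(w, c) \leq t$. Then
\begin{align*}
\dist(c, c') \leq \dist(c, w) + \dist(w, c') \leq 2t < d.
\end{align*}
Since $c - c' \in \Co$ and the minimum distance of $\Co$ is $d$, this forces $c = c'$. Hence $c = w - e$ is the unique closest codeword to $w$.

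There is essentially no obstacle here beyond carefully invoking the correct pieces: the fact that reducing $\bx^w$ modulo $I(\Co)$ realises the coset correspondence from Proposition~\ref{prop-transversal}, and the classical argument that if a codeword lies within $t$ of a received word then it is uniquely determined. No degree-compatibility of $\succ$ is actually needed for this statement, because the hypothesis on $\wt(e)$ already provides the decoding certificate independently of which standard monomial representative of the coset happens to be selected.
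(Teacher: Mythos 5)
Your proof is correct and takes essentially the same route as the paper: the remainder condition gives $w-e\in\Co$, hence a codeword within distance $t$ of $w$, and uniqueness follows because two codewords within distance $t$ of $w$ would be at distance at most $2t<d$ from each other. If anything, your version is slightly cleaner, since the paper's proof starts by \emph{assuming} $w=c+e$ with $\wt(e)\leq t$ for some transmitted codeword $c$, whereas you correctly derive the existence of a nearby codeword from the hypothesis on the remainder.
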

\begin{proof}
 Let $c$ be the transmitted codeword and $w$ the received word which contains at most 
$t=\left\lfloor{\frac{d-1}{2}}\right\rfloor$ errors, i.e., $w=c+e$ for some $e\in\F_p^n$ 
with $\wt(e)\leq t$. Let $\bx^f$ be the remainder of 
$\bx^w$ on division by $\mathcal{G}$ with $\wt(f)\leq t$. 
Then $\bx^w-\bx^f\in I(\Co)$ and we see that $w-f$ is a codeword. 
But there is exactly one codeword in the ball around $w$ with radius $t$ and so   
$\dist(w,w-f)\leq t$ implies that $w-f=c$ and hence $e=f$.
\end{proof}

\begin{example}\label{ex-decode1} 
Consider the $[7,2,5]$ code $\Co$ over $\F_3 =\{0,1,2\}$ with generator matrix
\begin{align*}
 G=\begin{pmatrix}
    1&0&1&2&1&1&1\\0&1&2&2&1&0&2
   \end{pmatrix}.
\end{align*}
The associated code ideal is
$$I(\Co)=\left\langle x_1-x_3^2x_4x_5^2x_6^2x_7^2, x_2-x_3x_4x_5^2x_7\right\rangle
+\left\langle x_i^3-1\mid 3\leq i\leq 7\right\rangle.$$
Moreover, let $\mathcal{G}$ be the reduced Gröbner basis for $I(\Co)$ w.r.t.\ the 
degree reverse lexicographical ordering~$\succ$.  

Assume that the codeword $c=(1,2,2,0,0,1,2)$ has been transmitted and the word $w=(0,2,2,0,0,0,2)$ has been received. 
Since the code has error-correcting capability $t=2$, the errors in $w$ should be correctable.
This can be accomplished in the code ideal $I(\Co)$.
For this, the division of the remainder of $x_2^2x_3^2x_7^2$ by $\mathcal{G}$ yields the monomial 
$x_1^2x_6^2$ and thus the error vector $(2,0,0,0,0,2,0)$. 
Thus the decoding $(0,2,2,0,0,0,2)-(2,0,0,0,0,2,0)=(1,2,2,0,0,1,2)$ yields the codeword sent.

However, if the received word is $w'=(0,1,2,0,0,1,2)$, then division of $x_2x_3^2x_6x_7^2$ by $\mathcal{G}$ 
provides the remainder $x_4x_5^2x_6$ and thus the codeword $c'=(0,1,2,2,1,0,2)$ 
which is different from $c$ although $w'$ has only two 
erroneous positions~\cite[Examples 4.9 and 4.29]{marquez_corbella}.
\end{example}

The last example illustrates the shortcomings of the code ideal. 
When the word $w'=(0,1,2,0,0,1,2)$ is received,  the error vector $e'=(0,0,0,1,2,1,0)$ is obtained instead of $e=(2,2,0,0,0,0,0)$.
The reason is that $x_1^2x_2^2\succ x_4x_5^2x_6$ and thus $x_4x_5^2x_6$ is the appointed coset representative 
(and in the sense of Prop.~\ref{prop-transversal}). 
Many such examples can be constructed where the decoding by the ideal $I(\Co)$ does not work correctly (in the sense of nearest neighbor decoding).
Nonetheless, there are heuristics which may overcome this problem.
For instance, Alg.~\ref{alg-decodingHeuristic} provides such a heuristic making use of the following 
three subroutines:
\begin{itemize}
 \item $\texttt{modulo}(w,p)$ applied to $w\in\ZZ^n$ and a prime number $p$ returns the vector $w$ whose
	components are reduced modulo $p$.
 \item $\texttt{invert}(\alpha,p)$ applied to an integer $\alpha<p$ returns an integer $\alpha^{-1}$ 
	such that $\alpha\cdot \alpha^{-1}=1\mod p$.
 \item $\texttt{reduce}(\bx^w,\mathcal{G})$ applied to a monomial $\bx^w$ and a set of polynomials $\mathcal{G}$
	 returns the remainder of $\bx^w$ on division by $\mathcal{G}$.
\end{itemize}

\renewcommand{\algorithmicrequire}{\textbf{Input:}}
\renewcommand{\algorithmicensure}{\textbf{Output:}}

\begin{algorithm}[htb!]
\caption{Heuristic for nearest neighbor decoding}
\begin{algorithmic}[1] 
    \REQUIRE Gröbner basis $\mathcal{G}(I(\Co))$ w.r.t.\ a degree compatible order, 
	      error correcting capability $t$, prime number $p$, 
	      and received word $w$ 
    \ENSURE Either a codeword $c$ such that $\dist(c,w)\leq t$ or \textit{fail}
    \STATE $i=1;\, c:=\mathbf{0};$
    \WHILE{$i< p$}
	\STATE $w=\texttt{modulo}(iw,p);$
        \STATE $\bx^e=\texttt{reduce}( \bx^w,\mathcal{G});$
	\IF{$\supp(e)\leq t$}
	\STATE $c=\texttt{modulo}(w-e,p);$
	\STATE $i^{-1}=\texttt{invert}(i,p);$
	\STATE $c=\texttt{modulo}(i^{-1}c,p);$
        \RETURN $c$
        \ELSE  
        \STATE $i=i+1;$
	\ENDIF
    \ENDWHILE
    \RETURN $c=$ \textit{fail}
\end{algorithmic}
\label{alg-decodingHeuristic}
\end{algorithm}

\begin{example}
Revisit Ex.~\ref{ex-decode1} and apply Alg.~\ref{alg-decodingHeuristic} with the parameters
$t=2$, $p=3$ and $w=w'=(0,1,2,0,0,1,2)$. For $\alpha=1$ the remainder $x_4x_5^2x_6$ of $x_2x_3^2x_6x_7^2$ 
is computed whose
exponent clearly does not satisfy the condition in line 5. For $\alpha=2$, however, the remainder 
of $x_2^2x_3x_6^2x_7$ is $x_1x_2$ and thus Alg.~\ref{alg-decodingHeuristic} yields the correct
codeword $c=(1,2,2,0,0,1,2)$.
\end{example}

The advantage of this heuristic is that if it does give the nearest codeword to the received word,
then we can recognize this (assuming that the minimum distance is known). 
However, if the output is \textit{fail}, 
then chances are that the received word contains at most $t$ errors. 

In some situations it can be guaranteed that the above heuristic gives a result other than \textit{fail}. 
For this, note that $\alpha e+\Co=\alpha\left(e+\Co\right)$ for all $\alpha\in\F_p^\ast$. 
This means that $\alpha w$ lies in the coset $\alpha e+\Co$ if and only if $w$ lies in the coset $e+\Co$.
If the representative of minimal Hamming in the coset $w+\Co$ does not coincide with the
representative appointed by the reduced Gröbner basis w.r.t.\ the chosen degree compatible monomial order, 
Alg.~\ref{alg-decodingHeuristic} jumps to the cosets $2(w+\Co)$, $3(w+\Co),\dots$ and so on. 
Some situations in which the algorithm will work are as follows:
\begin{itemize}
 \item For codes with minimum distance $d=3$ or $d=4$ (i.e., when $1$ error can be corrected) all error
patterns not exceeding the error correcting capability can be corrected by Alg.~\ref{alg-decodingHeuristic}.
\item For all codes errors not exceeding the error correcting capability can be corrected by
Alg.~\ref{alg-decodingHeuristic} whose error values are all the same.
\end{itemize}

Moreover, for a fixed code it can be tested for which error patterns 
Alg.~\ref{alg-decodingHeuristic} will fail.
Consider a word $w=c+e$ with $\wt(e)\leq t$ such that
the remainder of $\bx^w$ is $\bx^f$ with $\wt(f)>t$. Clearly, $\bx^f$ is a standard monomial and
$w-e\in\Co$ and $w-f\in\Co$ imply that $e-f\in\Co$ and so $\bx^e-\bx^f\in I(\Co)$. 
Since $\bx^f$ is a standard monomial it follows that the monomial $\bx^e$ must belong to $\lt(I(\Co))$.
Hence, the algorithm will fail if $\bx^{\alpha e}\in\lt(I(\Co))$ for all $\alpha\in\F_p^\ast$.
Consequently, the decoding algorithm~\ref{alg-decodingHeuristic} can be incomplete but when it works it provides a reduction in complexity.



\bibliographystyle{plain}
\bibliography{refBook,refPaper}

\end{document}